\newcommand{\Ps}{\mathbf{P}}
\newcommand{\As}{\mathbf{A}}
\newcommand{\Q}{\mathbf{Q}}
\newcommand{\Z}{\mathbf{Z}}
\newcommand{\oK}{\overline{K}}
\newcommand{\cO}{\mathcal{O}}
\newcommand{\cC}{\mathcal{C}}
\newcommand{\cB}{\mathcal{B}}
\newcommand{\cX}{\mathcal{X}}
\newcommand{\cU}{\mathcal{U}}
\newcommand{\cW}{\mathcal{W}}
\DeclareMathOperator{\rank}{rank}
\DeclareMathOperator{\Gal}{Gal}
\DeclareMathOperator{\NS}{NS}
\DeclareMathOperator{\pr}{pr}
\DeclareMathOperator{\spa}{sp}
\DeclareMathOperator{\Aut}{Aut}
\DeclareMathOperator{\tw}{tw}
\newtheorem{lemma}{Lemma}[section]
\newtheorem{proposition}[lemma]{Proposition}
\newtheorem{theorem}[lemma]{Theorem}
\newtheorem{corollary}[lemma]{Corollary}
\newtheorem{conjecture}[lemma]{Conjecture}
\theoremstyle{definition}
\newtheorem{notation}[lemma]{Notation}
\newtheorem{definition}[lemma]{Definition}
\newtheorem{example}[lemma]{Example}
\theoremstyle{remark}
\newtheorem{remark}[lemma]{Remark}
\title[Average Mordell-Weil rank of elliptic surfaces]{The average Mordell-Weil rank of elliptic surfaces over number fields}
\author{Remke Kloosterman}
\email{klooster@math.unipd.it}
\address{Universit\`a degli Studi di Padova,
Dipartimento di Matematica ``Tullio Levi-Civita",
Via Trieste 63,
35121 Padova, Italy}
\thanks{The author would like to thank Matthias Sch\"utt for comments on a previous version of this paper.}
\date{\today}
\begin{document}
\begin{abstract} 
 Let $K$ be a finitely generated field over $\Q$. 
 Let $\cX\to \cB$ be a family of elliptic surfaces over $K$ such that each elliptic fibration has the same configuration of singular fibers. 
 Let $r$ be the minimum of the Mordell-Weil rank in this family. Then we show that the locus inside $|\cB|$ where the Mordell-Weil rank is at least $r+1$ is a sparse subset.
 
 In this way we  prove Cowan's conjecture on the average Mordell-Weil rank of elliptic surfaces over $\Q$ and prove a similar result for elliptic surfaces over arbitrary number fields.
\end{abstract}
\maketitle
\section{introduction}
In a recent preprint Alex Cowan \cite{Cowan} formulated the following conjecture on the average rank of elliptic curves over $\Q(t)$.
Let $\mu$ be the Mahler measure  on $\Z[t]$ and let 
$P_d(M ) = \{p \in \Z[T ] \mid \deg(p) \leq  d, \mu(p) < M \}$.
Define $S_{m,n}(M)$ to be the set
\[ \left\{E_{A,B} : y^2 = x^3 + A(t )x + B(t ) \left|\begin{array}{c} A \in P_m(M^2), B \in P_n(M ^3),\\ 4A(t )^3 + 27B(t )^2\neq 0\end{array}\right.\right\}.\]
\begin{conjecture}[Cowan]
 For every pair of positive integers $m,n$ we have 
\[ \lim_{M\to\infty} \frac{1}{\# S_{m,n}(M)} \sum_{E\in S_{m,n}(M)} \rank E(\Q(t))=0.\]
\end{conjecture}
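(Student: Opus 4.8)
The plan is to deduce Cowan's conjecture from our main theorem by stratifying the parameter space of pairs $(A,B)$ according to the configuration of singular fibers and disposing of the low-dimensional and isotrivial strata by hand. First I would view $S_{m,n}(M)$ as a set of integral points of bounded height in the affine space $V=\As^{m+1}\times\As^{n+1}$ over $\Q$ whose coordinates are the coefficients of $A$ and $B$. For polynomials of bounded degree the Mahler measure is comparable, up to constants depending only on the degree, to the sup norm of the coefficient vector, so the region cut out by $\mu(A)<M^2,\ \mu(B)<M^3$ is comparable to a box with piecewise smooth boundary; a standard lattice-point count then gives $\#S_{m,n}(M)=cM^{2m+3n+5}+o(M^{2m+3n+5})$ with $c>0$, and, crucially, shows that any subvariety of $V$ of dimension $<\dim V$ — and more generally any sparse subset of $|V|$ — meets $S_{m,n}(M)$ in only $o(M^{2m+3n+5})$ points.

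Next I would stratify $V$. Let $\Delta\subset V$ be the closed locus where $4A(t)^3+27B(t)^2$ vanishes identically, and let $V^{\mathrm{iso}}\subset V$ be the locus where the Weierstrass fibration $y^2=x^3+A(t)x+B(t)\to\Ps^1$ has constant $j$-invariant; since $m,n\ge 1$, the closed set $V^{\mathrm{iso}}$ (which contains the locus where $A$ and $B$ are both constant) has dimension $<\dim V$. On $V\setminus(\Delta\cup V^{\mathrm{iso}})$ the map sending $(A,B)$ to the configuration of Kodaira fibers of the relatively minimal model of $y^2=x^3+A(t)x+B(t)$ over $\Ps^1$ (including the fiber over $t=\infty$) is constructible with $\Q$-rational level sets, and over each such level set the tautological family can be minimalized uniformly; we thus write $V\setminus(\Delta\cup V^{\mathrm{iso}})=\bigsqcup_{i=1}^{s}U_i$, a finite disjoint union of locally closed $\Q$-subvarieties, with each $\cX_i\to U_i$ a family of non-isotrivial elliptic surfaces all sharing the same configuration of singular fibers. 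Let $U_{i_0}$ be the unique stratum dense in $V$; every other $U_i$, as well as $V^{\mathrm{iso}}$, has dimension $<\dim V$.

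For each $i$, set $r_i=\min_{b\in|U_i|}\rank\cX_{i,b}$; by our main theorem the locus $Z_i\subset|U_i|$ where the Mordell--Weil rank exceeds $r_i$ is sparse. The decisive point is that $r_{i_0}=0$. The geometric Mordell--Weil rank over $\oQ(t)$ can be positive on the dense stratum — it is forced to be when the members are rational elliptic surfaces — but it vanishes over $\Q(t)$ at a sufficiently general $\Q$-point: the image of $\Gal(\oQ/\Q)$ acting on $\MW(\cX_{i_0,\bar{b}})\otimes\Q$ generically contains the full relevant reflection group, which has no nonzero fixed vector on the root lattice $\MW\otimes\Q$, and since the exceptional locus is a countable union of proper subvarieties, Hilbert irreducibility supplies a $\Q$-point $b$ with $\rank\cX_{i_0,b}(\Q(t))=0$. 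Hence $r_{i_0}=0$, and on $U_{i_0}\setminus Z_{i_0}$ every member has Mordell--Weil rank $0$.

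Finally I would assemble the estimate. Since the relevant surface invariants are constant on $U_{i_0}$, the rank there is bounded by a constant $R=R(m,n)$ (say by $h^{1,1}$), so the contribution of $U_{i_0}$ to $\sum_{E\in S_{m,n}(M)}\rank E(\Q(t))$ is at most $R\cdot\#(S_{m,n}(M)\cap Z_{i_0})=o(M^{2m+3n+5})$ because $Z_{i_0}$ is sparse. On each of the finitely many other strata the rank is likewise bounded by a constant while $S_{m,n}(M)$ meets these lower-dimensional sets in $o(M^{2m+3n+5})$ points; and on the locus where $A,B$ are both constant one has $\rank E_{A,B}(\Q(t))=\rank E_{A,B}(\Q)$, there being only $O(M^5)$ such pairs, so by the boundedness of the average rank of elliptic curves over $\Q$ in the height ordering (Bhargava--Shankar) their total contribution is $O(M^5)=o(M^{2m+3n+5})$. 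Summing, $\sum_{E\in S_{m,n}(M)}\rank E(\Q(t))=o(\#S_{m,n}(M))$, which is Cowan's conjecture. Beyond the sparseness theorem itself, the main obstacle is the equality $r_{i_0}=0$: one must show that a generic rational member of each top-dimensional stratum has Mordell--Weil rank zero over $\Q(t)$, i.e., that the Galois action on the geometric Mordell--Weil lattice is generically as large as the monodromy and fixes no nonzero vector. For these Weierstrass families this is a (largely classical) big-monodromy computation together with Hilbert irreducibility, but it carries the arithmetic content separating the rank over $\Q(t)$ from the geometric rank and must be run uniformly over all strata; matching the precise notion of ``sparse'' to the Mahler-measure count, by contrast, is routine.
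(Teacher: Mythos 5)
Your reduction of the counting problem to a sparseness statement (Mahler measure comparable to naive height, lower--dimensional and sparse loci contributing $o(\#S_{m,n}(M))$, Shioda--Tate bounding the rank on the non-trivial strata, Bhargava--Shankar for constant curves) matches the paper's endgame. But the heart of your argument --- the equality $r_{i_0}=0$, i.e.\ that the \emph{arithmetic} Mordell--Weil rank over $\Q(t)$ vanishes generically on the dense stratum --- is exactly the point you do not prove. You assert that the Galois image on $\MW(\cX_{i_0,\overline b})\otimes\Q$ ``generically contains the full relevant reflection group, which has no nonzero fixed vector on the root lattice,'' calling this a largely classical big-monodromy computation. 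That claim is not established anywhere in your argument, it is not uniform in $(m,n)$, and the paper explicitly states that for several $(m,n)$ it could not even determine the geometric group $E_\eta(\overline{K(\eta)}(t))$, let alone the Galois action on it. Moreover your appeal to Hilbert irreducibility to find a $\Q$-point avoiding ``a countable union of proper subvarieties'' is not valid as stated: Hilbert irreducibility controls thin sets, not arbitrary countable unions of proper closed subsets (over $\Q$ such a union can exhaust all rational points). The paper avoids the monodromy computation entirely by a quadratic twisting argument: since $(A,B)\mapsto(d^2A,d^3B)$ keeps the pair inside $U_{m,n}$, the set $\tw((A,B))$ is all of $K^*/(K^*)^2$, and a finite-image Galois representation has only finitely many quadratic twists with nonzero invariants (Proposition~\ref{prpTwist}); combined with injectivity of specialization on the fixed-fiber-configuration locus this forces $E_\eta(K(\eta)(t))$ to be finite. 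This twisting step is the missing idea in your proposal.

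A secondary gap: you bound the rank on every stratum other than the constant-coefficient locus by a constant, but when $\min(m/4,n/6)\geq 1$ the trivial locus $\{(\lambda u^4,\mu u^6)\}$ contains pairs with $u$ nonconstant; there $E_{A,B}(\Q(t))\cong E_{\lambda,\mu}(\Q)$ and no uniform rank bound is known. The paper isolates this locus as $Z^{\mathrm{tr}}_{m,n}$ and again invokes Bhargava--Shankar on it; your stratification must do the same rather than fold it into the ``bounded rank'' strata.
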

 Battistoni, Bettin and Delaunay proved this conjecture for $1\leq m,n\leq 2$ and for certain unirational subfamilies of $S_{2,2}$, see \cite{BBD}. 

In this paper we will prove Cowan's conjecture for all $(m,n)$ with $m,n\geq 1$, and generalize this to arbitrary number fields. The main ingredient holds in a more general context:
Fix a field $K$, finitely generated over $\Q$. Fix a smooth geometrically irreducible base variety $\cB/K$. 
Let $\cX\to \cB$ be a family of elliptic surfaces with a section, with $\cC\to \cB$ the base curve of the elliptic fibration (cf. Section~\ref{secFam}). Let $\eta$ be the generic point of $\cB$. Then the generic fiber of $\cX_\eta\to\cC_\eta$ is an elliptic curve  $E_\eta/ K(\eta)(\cC_\eta)$. Similarly, for a closed point $b\in |\cB|$ the generic fiber of $\cX_b\to\cC_b$ is an elliptic curve  $E_b/K(b)(C_b)$. 
There is a specialization map
\[ E_\eta(K(\eta)(\cC_\eta))\to E_b(K(b)(\cC_b))\]
and a second map if one passes to algebraic closures of $K(\eta)$ and $K(b)$. We prove the following result on these specialization maps:
\begin{theorem} Let $K$ be a finitely generated field over $\Q$, and let $\mathcal{X}\to \mathcal{B}$ be a family of elliptic surfaces with a section over $K$. Then there exists a sparse set $Z\subset |\mathcal{B}|$ such that for all $b\in |\cB|\setminus Z$  
the specialisation maps
\[ E_{\eta} (\overline{K(\eta)}(\cC_\eta)) \to E_b(\overline{K(b)}(\cC_b))\]
and
\[ E_{\eta} ({K(\eta)}(\cC_\eta)) \to E_b({K(b)}(\cC_b))\]
are bijective.
%
%
\end{theorem}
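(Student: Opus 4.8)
The plan is to reduce the bijectivity of the two specialization maps to a height/counting statement over the base $\cB$. The key classical input is the Silverman--Néron specialization theorem: the specialization map on Mordell--Weil groups is injective for all but finitely many fibers, when the base of the elliptic fibration is a curve over a number field or a one-dimensional function field. More precisely, I would first recall (or prove, by a standard argument over the generic point) that there is a proper closed subset $Z_0 \subset \cB$ outside of which both specialization maps in the statement are \emph{injective} with finite cokernel, and that the ranks satisfy $\rank E_b(\oK(b)(\cC_b)) \ge \rank E_\eta(\oK(\eta)(\cC_\eta))$ and similarly for the non-geometric version. So the real content is to show that the locus where strict inequality holds — equivalently, where some new section appears that is not the specialization of a section over $\eta$ — is sparse.

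**Reducing to a Hilbert-irreducibility-type statement.** The second step is to interpret a failure of surjectivity. If a new section exists at $b$, it generates (together with the specializations of the generic sections) a sublattice of $\NS(\cX_b)$ strictly larger than the sublattice coming from $\eta$. I would use the trivial lattice plus Mordell--Weil description of the Néron--Severi group of an elliptic surface (Shioda--Tate): $\rank \NS(\cX_b) = 2 + \sum_v (m_v - 1) + \rank\MW$, where the singular-fiber contributions $m_v$ are constant across the family by hypothesis. Hence a jump in Mordell--Weil rank is exactly a jump in Picard number of the fiber surface $\cX_b$. So the locus to be controlled is the \emph{Noether--Lefschetz locus} of the family $\cX \to \cB$: the set of $b$ where $\NS(\cX_b)$ is strictly bigger than the ``generic'' $\NS(\cX_\eta)$ (appropriately transported). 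The decisive tool is then a theorem to the effect that, for a family of smooth projective surfaces over a finitely generated field of characteristic zero, the Noether--Lefschetz locus — the set of closed points where the geometric Picard number jumps — is a sparse subset of $|\cB|$. For the geometric specialization map this handles everything; for the arithmetic (non-geometric) map one additionally has to control the Galois action, i.e. ensure that outside a further sparse set the Galois-invariant part of $\NS(\cX_b)$ also does not jump, which follows by combining the Noether--Lefschetz statement with a Hilbert-irreducibility argument applied to the finite extension of $K(b)$ cut out by the action on $\NS(\cX_b)_{\oK}$.

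**Assembling the pieces and the main obstacle.** Concretely I would proceed as follows. (i) Spread out: choose a model of $\cX \to \cC \to \cB$ over a finitely generated $\Z$-algebra, so that fibers are honest elliptic surfaces and reduction arguments are available. (ii) Apply Silverman specialization to get injectivity outside a proper closed set, and the Shioda--Tate formula to translate rank into Picard number. (iii) Invoke the sparsity of the Noether--Lefschetz locus for the geometric map; this gives the first bijection. (iv) For the arithmetic map, note that $E_b(K(b)(\cC_b))$ is the Galois-fixed part of $E_b(\oK(b)(\cC_b))$, and the Galois action on the geometric Mordell--Weil lattice factors through a finite quotient; a Hilbert-irreducibility / Néron-type argument shows that outside a sparse set the fixed subspace has the same rank as at $\eta$, giving the second bijection. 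The union of all the exceptional sets produced is a finite union of sparse sets, hence sparse, and can be taken to be the set $Z$ in the statement. The main obstacle I anticipate is step (iii): proving (or correctly citing) that the Noether--Lefschetz locus is sparse — not merely a countable union of proper closed subvarieties, which is classical, but genuinely sparse in the arithmetic sense (bounded by $O(H^{\dim \cB - \delta})$ points of height $\le H$ for some $\delta > 0$). This requires an effective, uniform version of the Noether--Lefschetz theorem together with a counting estimate, and controlling the interaction with the field of definition $K$ is the delicate part; handling bad reduction primes and the non-geometric Galois descent cleanly will be the second source of technical difficulty.
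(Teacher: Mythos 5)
Your plan is essentially the paper's proof: injectivity via constancy of the singular-fiber configuration plus the Shioda--Tate formula, geometric bijectivity via sparsity of the Picard-jumping (Noether--Lefschetz) locus, and the arithmetic case via a Hilbert-irreducibility argument on the finite Galois extension over which the geometric sections are defined. The ``main obstacle'' you flag in step (iii) is exactly \cite[Theorem 8.3]{MauPoo} and can simply be cited (and note that the injectivity step is best obtained from the N\'eron--Severi specialization map of \cite[Proposition 3.6]{MauPoo} rather than from Silverman's theorem, which concerns a different kind of specialization).
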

(This result is a combination of Proposition~\ref{prpMain} and Theorem~\ref{mainThmB}.) For a definition of thin subsets and of sparse subsets see Section~\ref{secFam}.
The existence of the first isomorphism relies heavily on  \cite[Theorem 8.3]{MauPoo} and the main result of \cite{And}. We obtain the second isomorphism from  the first by using an appropriate form of Hilbert irreducibility theorem.

In the proof of  Cowan's conjecture we need to take for $\cB$ some Zariski open subset of $S_{m,n}$. Let $\cX$ be the universal elliptic curve over $\cB$. In order to prove Cowan's conjecture it suffices to show that the associated generic Mordell-Weil group $E_{\eta}(K(\eta)(C_\eta))$  is finite . 
If $m=4k$ and $n=6k$ for some integer $k>1$ then using Noether-Lefschetz theory \cite{CoxNL} one can show that  $E_{\eta}(\overline{K(\eta)}(C_\eta))$ is trivial. This yields Cowan's conjecture almost immediately. 
 However, for certain values of $(m,n)$ we were not able to determine $E_{\eta}(\overline{K(\eta)}(C_\eta))$ and, moreover, in the range $1\leq m\leq 4; 1\leq n\leq 6$ this larger group has positive rank. We will use quadratic twisting to show that for  all choices of $m$ and $n$ there is a $b\in |\cB|\setminus Z$ such that the arithmetic Mordell-Weil group $E_{b} ({K(b)}(\cC_b))$ is finite.
More precisely, given  a rational point $b\in \mathcal{B}(K)$ consider 
\[ \tw(b):=\left\{ d\in K^*/(K^*)^2 \left| \begin{array}{c}\mbox{ there exists a point  } b'\in B(K)\\ \mbox{ such that }  \cX_b/\cC_b\not \cong \cX_{b'}/\cC_{b'} \\\mbox { and }
( \cX_b/\cC_b)_{K(\sqrt{d})} \cong (\cX_{b'}/\cC_{b'})_{K(\sqrt{d})} \end{array}\right.\right\} \]
the set of $d$ such that the quadratic twist of $\cX_b$ by $\sqrt{d}$ is also contained in the family $\cX\to \cB$.
 \begin{theorem}
  Let $K$ be a finitely generated field over $\Q$ and let $\mathcal{X}\to \mathcal{B}$ be a family of elliptic surfaces with a section.

Let $b\in B(K)$ be a point such that $\tw(b)$ is infinite and such that the configuration of singular fibers of $\cX_b\to \cC_b$ is the same as $\cX_\eta\to \cC_\eta$.
Then $E_\eta(K(\eta)(C_\eta))$ is finite.
\end{theorem}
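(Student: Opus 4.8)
The plan is to use the hypothesis $\#\tw(b)=\infty$ to produce, inside $\cX\to\cB$, a rational curve's worth of quadratic twists of $\cX_b$, and then to play the two specialisation statements above off against one another. For $d\in\tw(b)$ choose $b'(d)\in B(K)$ with $\cX_{b'(d)}/\cC_{b'(d)}\cong(\cX_b/\cC_b)^{(d)}$. Since the extension $K(\sqrt d)/K$ is a constant field extension, the twist $(\cX_b/\cC_b)^{(d)}$ becomes isomorphic to $\cX_b$ over $\overline{K}$, so the Kodaira types are unchanged and each $\cX_{b'(d)}$ has the same configuration of singular fibers as $\cX_b$, hence as $\cX_\eta$. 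One checks, by rigidity and spreading out --- after discarding the isotrivial case and quotienting by the extra automorphisms when $j\equiv 0,1728$ --- that $d\mapsto b'(d)$ is the restriction to $\tw(b)$ of a morphism from a dense open of $\mathbf{G}_m$ onto a curve $\gamma\subset\cB$; equivalently one may simply work with the family $\{(\cX_b/\cC_b)^{(d)}\}_{d\in\mathbf{G}_m}$ over $\mathbf{G}_m$. In the case relevant to Cowan's conjecture, where $\cB$ is an open subscheme of a space of Weierstrass equations $y^2=x^3+A(t)x+B(t)$, this is explicit: $\gamma=\{(d^2A,d^3B):d\in\mathbf{G}_m\}\cong\mathbf{G}_m$, and $\tw(b)$ accounts for every non-square $d$.

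Next I would examine the restricted family $\cX_\gamma:=\cX\times_\cB\gamma\to\gamma$ (a family of elliptic surfaces with section over $K$, after possibly shrinking $\gamma$). Its generic fiber is the twist of $\cX_b$ by the tautological transcendental parameter $t$ on $\gamma$, an elliptic surface over $K(t)(\cC_b)$ with the same base curve $\cC_b$, and I claim its generic Mordell-Weil group $(\cX_b/\cC_b)^{(t)}(K(t)(\cC_b))$ is finite, indeed killed by $2$. A $K(t)(\cC_b)$-rational section corresponds, under the isomorphism $((\cX_b)^{(t)})_{K(\sqrt t)}\cong(\cX_b)_{K(\sqrt t)}$, to a section of $\cX_b$ over $K(\sqrt t)(\cC_b)=K(\cC_b)(\sqrt t)$; the latter is a rational function field over $K(\cC_b)$, over which $\cX_b$ has no new sections because $E_b$ is proper over $K(\cC_b)$ and an elliptic curve admits no nonconstant map from a rational curve. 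Thus the section already lies in $E_b(K(\cC_b))$, while the comparison isomorphism rescales the $y$-coordinate by a factor involving $\sqrt t$, which forces $y=0$. Hence the generic Mordell-Weil group of $\cX_\gamma\to\gamma$ is finite, so the bijectivity-of-specialisation Theorem above, applied to $\cX_\gamma\to\gamma$, gives a sparse $Z_\gamma\subset|\gamma|$ with $E_{b'}(K(b')(\cC_{b'}))$ finite for every $b'\in|\gamma|\setminus Z_\gamma$.

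Finally, applying that Theorem to $\cX\to\cB$ itself gives a sparse $Z\subset|\cB|$ with $E_\eta(K(\eta)(\cC_\eta))\cong E_{b'}(K(b')(\cC_{b'}))$ for all $b'\in|\cB|\setminus Z$. It now suffices to exhibit one twist point $b'(d)$, $d\in\tw(b)$, lying outside both $Z_\gamma$ and $Z$: for such a point this isomorphism forces $E_\eta(K(\eta)(\cC_\eta))$ to be finite, which is the assertion. This last step is the main obstacle. Since $\tw(b)$ is infinite the twist points form an infinite, non-sparse subset of $\gamma(K)$ --- essentially the non-squares --- so they cannot all lie in $Z_\gamma$; the delicate point is that they also escape $Z$, for which one needs that $\gamma\not\subset Z$. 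Here I would use that $Z$ is assembled from a Noether-Lefschetz-type closed locus together with a thin set (via \cite[Theorem 8.3]{MauPoo}, \cite{And} and an appropriate form of Hilbert irreducibility), that all members of $\cX_\gamma$ become isomorphic over $\overline{K}$ so that the geometric N\'eron-Severi lattice and the geometric Mordell-Weil rank are constant along $\gamma$, and that the singular-fiber hypothesis keeps $\cX_b$ in the generic stratum; together these should confine $\gamma\cap Z$ to a proper, hence avoidable, subset of $\gamma(K)$. Matching this with the precise notion of ``sparse'' from Section~\ref{secFam}, and disposing of the degenerate cases --- $\cX_b$ isotrivial, or a family whose moduli map is not generically injective --- is where the bulk of the work lies.
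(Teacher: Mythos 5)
Your route is genuinely different from the paper's, and the step you yourself flag as ``the main obstacle'' is a real gap, not a technicality. You need a twist point $b'(d)$ avoiding the sparse set $Z$ attached to the whole family $\cX\to\cB$ by Theorem~\ref{mainThmB}, but a sparse subset of $|\cB|$ can perfectly well contain the entire curve $\gamma$: for instance $\gamma$ may lie inside the Noether--Lefschetz jumping locus, since all members of $\cX_\gamma$ are geometrically isomorphic and their common geometric Picard number may exceed that of $\cX_{\overline\eta}$. Nothing in your argument rules this out, so the existence of a twist point outside $Z$ cannot be secured this way. A second, logically prior, gap is the construction of $\gamma$ itself: in the general statement the hypothesis only says that for each $d\in\tw(b)$ \emph{some} $b'\in B(K)$ with $\cX_{b'}\cong(\cX_b)^{(d)}$ exists; since $\cB$ is not assumed to be a (fine) moduli space, there is no reason these points assemble into a morphism from an open of $\mathbf{G}_m$ into $\cB$, nor that the resulting set of twist points is non-sparse in its Zariski closure. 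Both of these work in the Cowan application, but the theorem is stated for arbitrary families.

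The missing idea that removes the obstacle entirely is that one should use only the \emph{injectivity} of specialisation, not bijectivity. By Lemma~\ref{lemSpeInj}, injectivity of $E_\eta(K(\eta)(\cC_\eta))\to E_{b'}(K(b')(\cC_{b'}))$ holds at \emph{every} point whose configuration of singular fibers agrees with the generic one --- no sparse exceptional set --- and the twist points automatically satisfy this because the configuration is a geometric invariant, unchanged by quadratic twist. So it suffices to find a single $d\in\tw(b)$ with $E_b^{(d)}(K(\cC_b))$ finite. The paper gets this not from a generic-twist/specialisation argument along $\gamma$ but from pure representation theory (Proposition~\ref{prpTwist}): the Galois action on $V=E_b(\oK(\cC_b))\otimes_\Z\Q$ factors through a finite group, and at most $\dim V$ of the twists $V^{(d)}$ admit a nonzero invariant vector; since $\tw(b)$ is infinite, some $d\in\tw(b)$ gives rank zero, and injectivity finishes. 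Your function-field computation that the generic twist is killed by $2$ is correct and is the geometric shadow of Proposition~\ref{prpTwist}, but to descend from the generic twist to an actual $d\in K^*$ you would again need a specialisation theorem along $\gamma$ and a non-sparseness argument for the twist points --- which the representation-theoretic pigeonhole makes unnecessary. If you replace your third paragraph by the injectivity observation, your argument can be repaired, but as written it does not close.
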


These two results combined show that on the complement of a thin subset of $S_{m,n}$ the Mordell-Weil rank is zero. This is sufficient to prove Cowan's conjecture:
\begin{corollary}Let $K$ be a number field.
Fix positive integers $m,n$.
The set of ellliptic surfaces in $S_{m,n}$  with Mordell-Weil rank zero has density one.
\end{corollary}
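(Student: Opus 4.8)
The plan is to obtain the corollary essentially formally from the two theorems above; the only substantive points are a stratification of the parameter space by the configuration of singular fibers and the verification that one rational point satisfies the hypothesis of the twisting theorem.

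First I would stratify. The parameter space underlying $S_{m,n}$ is a Zariski-open subset $U\subset\As^{m+1}\times\As^{n+1}$ (the coefficients of $A$ and $B$) on which $4A(t)^3+27B(t)^2\ne0$; it is smooth and geometrically irreducible over $K$, with base curve $\Ps^1$. The configuration of singular fibers of the elliptic surface attached to a point of $U$ takes its generic value $\Sigma_0$ on a dense open subset $\cB^{\circ}\subseteq U$, the complement $U\setminus\cB^{\circ}$ being a proper Zariski-closed subset. By a standard (weighted) height count, the set of $(A,B)\in S_{m,n}$ that lie in $U\setminus\cB^{\circ}$ has density zero, so it suffices to prove that Mordell-Weil rank zero holds with density one among the $E\in S_{m,n}$ that lie in $\cB^{\circ}$.

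Next I would restrict the universal curve $y^2=x^3+A(t)x+B(t)$ to $\cB^{\circ}$, obtaining a family $\cX\to\cB^{\circ}$ of elliptic surfaces with a section over $K$, all of whose members have singular fiber configuration $\Sigma_0$, equal to that of the generic member $\cX_\eta$. The main theorem then provides a sparse set $Z\subset|\cB^{\circ}|$ outside of which the specialisation map $E_\eta(K(\eta)(\cC_\eta))\to E_b(K(b)(\cC_b))$ is bijective, so $\rank E_b(K(b)(\cC_b))=\rank E_\eta(K(\eta)(\cC_\eta))$ for all $b\in|\cB^{\circ}|\setminus Z$. To see this common value is $0$ I would verify the hypothesis of the twisting theorem at a general point $b_0\in\cB^{\circ}(K)$; such a point exists because $K$ is infinite and $\cB^{\circ}$ is a dense open in affine space, and I would take it general enough (for instance with $A_0\ne 0$ and $B_0\ne 0$) that $\Aut(E_{b_0}\otimes\overline{K(t)})=\{\pm1\}$. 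For $d\in K^{\ast}$ the constant quadratic twist of $b_0$ is represented by $(d^2A_0,d^3B_0)$, which still lies in $\cB^{\circ}$: its discriminant $d^6(4A_0^3+27B_0^2)$ is nonzero, and since a constant twist is geometrically trivial the singular fiber configuration is still $\Sigma_0$. Using $\Aut(E_{b_0}\otimes\overline{K(t)})=\{\pm1\}$, this twist is isomorphic to $\cX_{b_0}$ over $K(t)$ exactly when $d\in(K^{\ast})^2$, whereas it always becomes isomorphic to $\cX_{b_0}$ over $K(\sqrt d\,)(t)$. Hence $\tw(b_0)$ contains every nontrivial class of $K^{\ast}/(K^{\ast})^2$ and is infinite, so by the twisting theorem $E_\eta(K(\eta)(\cC_\eta))$ is finite and the common rank is $0$.

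Finally I would assemble: the set of $E\in S_{m,n}$ with Mordell-Weil rank at least $1$ is contained in the union of $(U\setminus\cB^{\circ})(K)$ and $Z$, that is, the $K$-points of a proper closed subvariety together with a sparse set, each of density zero (cf.\ Section~\ref{secFam}); hence the set of rank-zero surfaces has density one. The step I expect to be the main obstacle is the middle one: making sure the explicitly produced constant twists really do lie in the chosen stratum $\cB^{\circ}$ and not merely in $U$, and that no unexpected isomorphisms over $K(t)$ occur, which is precisely where the generality of $b_0$ and the equality $\Aut(E_{b_0}\otimes\overline{K(t)})=\{\pm1\}$ are used. Everything else is bookkeeping with the two theorems and a standard height estimate for subvarieties.
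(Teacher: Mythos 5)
Your overall route is the same as the paper's: pass to the open stratum where the configuration of singular fibers equals the generic one (your $\cB^{\circ}$ is exactly the paper's $U_{m,n}$, cut out by the condition that $4A^3+27B^2$ be squarefree of degree $\max(3m,2n)$), show via the constant quadratic twists $(d^2A_0,d^3B_0)$ that $\tw(b_0)$ is all of $K^*/(K^*)^2$ and hence that the generic arithmetic Mordell--Weil group is finite by Proposition~\ref{prpRankVan}, transfer this to all $b$ outside a sparse set by Theorem~\ref{mainThmB}, and finish with Serre's density theorem for thin/closed subsets. Your extra care about the clause $\cX_b\not\cong\cX_{b'}$ over $K(t)$ in the definition of $\tw$, via $\Aut(E_{b_0}\otimes\overline{K(t)})=\{\pm1\}$, addresses a point the paper leaves implicit.

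There is one step you elide that is not automatic and that the paper devotes a proposition to: ``restricting the universal curve to $\cB^{\circ}$'' does not by itself produce a family of elliptic surfaces in the sense of Definition~\ref{defFamily}. The fibers of the universal Weierstrass model $\cW_{m,n}\to\cU_{m,n}$ have a singular point over $t=\infty$ whenever $\alpha=4k-m>0$ and $\beta=6k-n>1$, and Theorem~\ref{mainThmB} ultimately rests on the Maulik--Poonen specialization results, which require $\cX\to\cB$ to be smooth and projective. One must therefore check that the singularity type at infinity is constant over the stratum and that the resolutions can be carried out simultaneously in the family, so that the resolved total space is smooth over $\cB^{\circ}$ and the trivial lattice is constant; this is precisely Proposition~\ref{prpConstant}. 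With that step supplied, your argument coincides with the paper's.
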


\begin{corollary}Let $K=\Q$.
Fix positive integers $m,n$.
Then the average rank of elliptic surfaces in $S_{m,n}$ equals zero.
\end{corollary}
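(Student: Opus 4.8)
The plan is to upgrade the preceding corollary---which states that the set of $E\in S_{m,n}$ with $\rank E(\Q(t))=0$ has density one---to a statement about the \emph{average} of the rank, by combining it with a uniform upper bound on $\rank E(\Q(t))$ valid across the entire family $S_{m,n}$. Density one of the rank-zero locus is not by itself enough, since for each $M$ the (finitely many) surfaces in $S_{m,n}(M)$ of positive rank could in principle carry arbitrarily large rank and dominate the sum; so the crux is to produce a constant $C=C(m,n)$ bounding the rank uniformly, after which a squeeze argument finishes the proof.

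For the uniform bound I would argue as follows. Given $E_{A,B}\colon y^2=x^3+A(t)x+B(t)$ in $S_{m,n}$, with $\deg A\leq m$ and $\deg B\leq n$, let $X\to\Ps^1$ be the associated relatively minimal elliptic surface. Passing from the given Weierstrass equation to the minimal Weierstrass model can only decrease the invariant $\chi(\cO_X)$, so $\chi(\cO_X)\leq N:=\max\bigl(\lceil m/4\rceil,\lceil n/6\rceil\bigr)$, which depends only on $m$ and $n$. Using $\rank E(\Q(t))\leq \rank E(\overline{\Q}(t))$, the Shioda--Tate formula, the inequality $\rho(X)\leq h^{1,1}(X)$, and Noether's formula (which gives $h^{1,1}(X)=10\chi(\cO_X)$ for these surfaces), one obtains
\begin{align*}
 \rank E(\Q(t)) &\leq \rank E(\overline{\Q}(t)) \leq \rho(X)-2 \\
 &\leq h^{1,1}(X)-2 = 10\chi(\cO_X)-2 \leq 10N-2=:C,
\end{align*}
so the rank is bounded by $C=C(m,n)$ for every $E\in S_{m,n}$, as desired.

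It remains to combine the two ingredients. Write $T(M):=\{E\in S_{m,n}(M):\rank E(\Q(t))\geq 1\}$; by the preceding corollary $\#T(M)/\#S_{m,n}(M)\to 0$ as $M\to\infty$. Since the rank is nonnegative on $S_{m,n}$ and bounded there by $C$,
\begin{align*}
 0 &\leq \frac{1}{\#S_{m,n}(M)}\sum_{E\in S_{m,n}(M)}\rank E(\Q(t)) \\
 &= \frac{1}{\#S_{m,n}(M)}\sum_{E\in T(M)}\rank E(\Q(t)) \\
 &\leq C\cdot\frac{\#T(M)}{\#S_{m,n}(M)} \xrightarrow[M\to\infty]{} 0,
\end{align*}
so the average tends to $0$. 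The only point that genuinely needs care is the uniformity of the rank bound over the whole family, but this is immediate from Shioda--Tate once one notes that the cohomological invariants of $X$ in question depend only on $m$ and $n$; the rest is a routine squeeze on top of the density-one statement.
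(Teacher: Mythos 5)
Your overall squeeze strategy (density one of the rank-zero locus plus control of the exceptional set) matches the paper's, but your key claim --- a uniform bound $\rank E_{A,B}(\Q(t))\leq C(m,n)$ valid for \emph{every} $E\in S_{m,n}$ --- is not true (or at least not known), and this is exactly the subtlety the paper has to work around. The set $S_{m,n}$ contains pairs $(A,B)=(\lambda u^4,\mu u^6)$ with $\lambda,\mu$ constants (e.g.\ constant $A$ and $B$, which satisfy $\deg A\leq m$, $\deg B\leq n$ for any $m,n\geq 1$). For such a pair the relatively minimal model of $W_{A,B}$ is a \emph{trivial} elliptic surface, birational to $E_{\lambda,\mu}\times\Ps^1$; the Shioda--Tate isomorphism $\NS(X_{\oQ})/T\cong E(\oQ(t))$ is stated in the paper only for non-trivial surfaces and genuinely fails here (indeed $E(\oQ(t))=E(\oQ)$ is not even finitely generated). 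In this case $\rank E_{A,B}(\Q(t))=\rank E_{\lambda,\mu}(\Q)$, the rank of an honest elliptic curve over $\Q$, which is not known to be bounded. So your chain of inequalities breaks at its first geometric step on the locus $Z^{\mathrm{tr}}_{m,n}$, and the positive-rank exceptional set $T(M)$ cannot simply be multiplied by a constant $C$.

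The paper's proof therefore splits the thin exceptional set $Z$ into $Z'=Z\cap Z^{\mathrm{tr}}_{m,n}(\Z)$ and $Z''=Z\setminus Z'$. On $Z''$ your argument is exactly what the paper does (Proposition~\ref{rnkBnd}: $\rank\leq 10k-2$ via Shioda--Tate, legitimate because triviality is excluded), and density zero times a bounded rank contributes nothing. On $Z'$ one cannot bound the rank, and the paper instead invokes Bhargava--Shankar: the \emph{average} rank of elliptic curves over $\Q$ (ordered compatibly) is finite, so a density-zero set with finite average rank still contributes zero to the overall average. A second, more minor point: Cowan's conjecture and this corollary concern integral coefficients, $S_{m,n}(\Z)$ ordered by Mahler measure, so the density statements must be the integral-points versions of Serre's theorems on thin sets (the paper cites Theorem 13.1.1 of Serre rather than the rational-points version used in Corollary~\ref{corDens}); your proposal works with $\Q$-points throughout. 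To repair your proof you must add the decomposition along $Z^{\mathrm{tr}}_{m,n}$ and the Bhargava--Shankar input for the trivial locus.
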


We will also prove a generalization of Cowan's conjecture to arbitrary number fields,  see Corollary~\ref{corCowGen}. However, in this case we have to adjust the definition of $S_{m,n}$ in order to exclude trivial elliptic fibrations, i.e., elliptic surfaces which are birational to $E\times \Ps^1$ as fibered surfaces.

The organization of this paper is as follows. In Section~\ref{secFam} we recall some standard results on families of elliptic surfaces. Moreover, we use one of the results from \cite{MauPoo} to determine the locus in the family where the specialization map on the Mordell-Weil group is injective, and then to describe the locus where this map is bijective.
In Section~\ref{secCow} we study the universal Weierstrass equation over $S_{m,n}$ and use the results from the previous section to show that for $K=\Q$ the average rank is zero.

\section{Families of elliptic surfaces}\label{secFam}

Let $K$ be a field. 
\begin{definition} An \emph{elliptic surface} $\pi: X\to C$ over $K$ consists of a geometrically irreducible smooth projective surface  $X/K$, a geometrically irreducible smooth projective curve $C/K$  and a flat $K$-morphism $\pi:X\to C$ such that the generic fiber of $\pi$ is a smooth projective curve of genus 1, and none of the fibers of $\pi$ contains a $(-1)$-curve.

An \emph{elliptic surface with a section} is an elliptic surface $\pi:X\to C$ together with a section $\sigma_0:C\to X$, defined over $K$.

Let $E/K(C)$ be the generic fiber. Then the group $E(K(C))$ of $K(C)$-valued points of $E$  can be naturally identified the set of rational sections of $\pi$. This latter set can be into a group by fiberwise addition, where the zero element of the fiber over a point $p\in C$ is $\sigma_0(p)$.

We call an elliptic surface $\pi:X\to C$ \emph{trivial} if there is an elliptic curve $E/K$ and a birational map $\psi:X\to E\times C$ such that $\pi=\pr_2\circ \psi$ as rational maps.
\end{definition}

Since $C$ is a smooth curve we can extend every rational section to a section of $\pi$, hence $E(K(C))$ is also the set of sections of $\pi$.
\begin{definition}Let $S$ be a  smooth projective surface over an algebraically closed field $K$. Denote with $\NS(S)$ the N\'eron-Severi group of $S$, the group of divisors on $S$ modulo algebraic equivalence.
\end{definition}

\begin{definition} Let $\pi:X\to C$ be an elliptic surface with a section. 
Let $T\subset \NS(X_{\oK})$ be the \emph{trivial subgroup}, generated by the irreducible components (over $\oK$) of the singular fibers not intersecting $\sigma_0(C)$, the class of a smooth fiber and the image of the zero section. (\cite[Section 6.1]{SSbook})
\end{definition}

We have the following results, which contains both the Mordell-Weil theorem and  the Shioda-Tate formula:
\begin{proposition}  \label{prpST} Let $\pi:X\to C$ be an elliptic surface with a section. If $X$ is not a trivial elliptic surface then there is a natural isomorphism of groups
\[ \NS(X_{\oK})/T\cong E(\oK(C)).\]
In particular, $E(\oK(C))$ is finitely generated.
\end{proposition}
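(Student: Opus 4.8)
The plan is to reduce to a well-known theorem by identifying the three classical facts that are being bundled together: the Shioda-Tate exact sequence, the Mordell-Weil theorem over function fields, and finite generation of the Néron-Severi group. First I would recall that for a smooth projective surface $X$ over an algebraically closed field, $\NS(X_{\oK})$ is a finitely generated abelian group; this is Severi's theorem (and in characteristic zero follows from the fact that $\NS(X)$ injects into $H^2(X,\Z)$, which is finitely generated). So the last sentence of the statement will follow immediately once the isomorphism is established. Then I would invoke the construction in \cite[Section 6.1]{SSbook}: there is a natural group homomorphism $E(\oK(C))\to \NS(X_{\oK})/T$ sending a section to the class of its image, and one checks this is well-defined using that $T$ already contains the zero section, a general fiber, and the fiber components missed by $\sigma_0$.

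The core of the argument is to show this map is an isomorphism. For injectivity, suppose a section $P$ maps into $T$; since $T$ is generated by vertical classes together with the fiber class and the zero section, and $P$ is a horizontal divisor (a section), intersecting with a general fiber forces the coefficient of every purely vertical generator to be controlled, and one deduces $P$ is algebraically equivalent to $\sigma_0(C)$; because on an elliptic surface with section two sections that are algebraically equivalent must coincide (this uses that the surface is relatively minimal, so the fibration has no $(-1)$-curves and the group law is rigid), we get $P = O$ in $E(\oK(C))$. For surjectivity, given any divisor class $D$ on $X_{\oK}$, subtract off a suitable rational multiple — really an integral multiple after clearing the fiber class — of the fiber and vertical components so that $D$ meets each fiber in degree one; then $D$ restricts on the generic fiber to a degree-one divisor, hence by Riemann-Roch on the genus-one curve $E$ to a single $\oK(C)$-point, which produces a section $P$ with $[P]\equiv D \pmod T$. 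The hypothesis that $X$ is not a trivial elliptic surface is exactly what guarantees the fibration has a finite Mordell-Weil group quotient structure rather than a continuous part — i.e. it is what lets us conclude $E(\oK(C))$ is finitely generated from finite generation of $\NS(X_{\oK})$, since in the trivial case the constant part $E_0(\oK)$ would appear.

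Having the isomorphism $\NS(X_{\oK})/T\cong E(\oK(C))$ in hand, finite generation of $E(\oK(C))$ follows because $\NS(X_{\oK})$ is finitely generated and quotients of finitely generated abelian groups are finitely generated. I expect the main obstacle to be the bookkeeping in the surjectivity step: one must be careful that clearing denominators against the fiber class can be done over $\oK$ and keeps the corrected divisor effective on the generic fiber, and that the resulting $\oK(C)$-point is genuinely rational over $\oK(C)$ rather than over a finite extension — here one uses that $C$ has a rational point structure already built in via $\sigma_0$, or alternatively cites the statement directly from \cite[Section 6]{SSbook} where all of this is carried out in detail. Since the proposition is explicitly attributed to the Shioda-Tate formula and the Mordell-Weil theorem, the cleanest writeup is to cite \cite{SSbook} for the exact sequence and note that finite generation of $\NS$ plus non-triviality of the fibration yields the ``in particular'' clause.
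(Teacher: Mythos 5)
Your proposal is correct and matches the paper, whose entire proof is a citation of \cite[Theorem 6.5]{SSbook} for the Shioda--Tate formula; your concluding recommendation to cite that reference is exactly what the paper does. The additional sketch of the underlying argument (the section-to-divisor-class map, injectivity via rigidity of sections, surjectivity via restriction to the generic fiber, and the role of non-triviality in killing the constant part $E(\oK)$) is accurate and consistent with the standard proof.
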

\begin{proof}
See \cite[Theorem 6.5]{SSbook}
\end{proof}
\begin{notation}
Let $\cB$ be a $K$-variety. Then denote with $|\cB|$ the set of closed points of $\cB$ and with $\eta$ the generic point of $\cB$, with residue field $K(\eta)=K(\cB)$
\end{notation}
\begin{definition}\label{defFamily}
A \emph{family of elliptic surfaces with a section} consists of a smooth geometrically irreducible $K$-variety $\cB$, a smooth projective surface $\mathcal{X}\to \cB$, and a smooth curve $\mathcal{C}\to \cB$ together with morphisms $\pi: \mathcal{X}\to \mathcal{C}$ and $\sigma_0:\mathcal{C}\to \mathcal{X}$ of $\cB$-schemes, such that for each closed point $b\in |\cB|$ we have that $\pi_b:\mathcal{X}_b\to \mathcal{C}_b$ is an elliptic surface over $K(b)$ with zero-section $(\sigma_0)_b:\mathcal{C}_b\to \mathcal{X}_b$.
\end{definition}
\begin{remark}
Let $\cX\to \cC\to \cB$ be a family of elliptic surfaces.
Then $\cX_\eta$ is an elliptic surface over $K(\eta)$, and  for every $b\in |\cB|$ the surface $\cX_b$ is an elliptic surface over $K(b)$. We denote with $\cX_{\overline{\eta}}$ the base change of $\cX_\eta$ to $\overline{K(\eta)}$ and with $\cX_{\overline{b}}$ the base change of $\cX_b$ to $\overline{K(b)}$.
\end{remark}
\begin{proposition} Let $\cX\to \cC\to \cB$ be a family of elliptic surfaces.
Suppose $\cX_\eta$ is not a trivial elliptic surface. Then for all $b\in B$ we have that $\cX_b$ is not a trivial elliptic surface.
\end{proposition}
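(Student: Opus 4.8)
The plan is to show that triviality cannot \emph{improve} under specialisation: if $\cX_\eta$ is not trivial then no $\cX_b$ is. The main tool is that the arithmetic Euler characteristic $b\mapsto\chi(\cO_{\cX_b})$ is constant on the connected base $\cB$, since $\cX\to\cB$ is flat and proper. Recall that a trivial elliptic surface $E_0\times C$ has $\chi(\cO)=\chi(\cO_{E_0})\,\chi(\cO_C)=0$, and that $\chi(\cO)$ is a birational invariant of smooth projective surfaces; hence an elliptic surface with $\chi(\cO)\neq 0$ is never trivial. So if $\chi(\cO_{\cX_\eta})>0$ we are done at once: $\chi(\cO_{\cX_b})=\chi(\cO_{\cX_\eta})>0$ for every $b\in|\cB|$. (For the application to Cowan's conjecture one works with $\cB\subset S_{m,n}$, where the generic discriminant has simple zeros and $\chi(\cO_{\cX_\eta})>0$, so this first step already suffices there.)

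It remains to treat $\chi(\cO_{\cX_\eta})=0$. Then $12\chi(\cO_{\cX_b})$ is the sum of the Euler numbers of the fibres of $\pi_b$, with smooth fibres contributing $0$ and singular fibres a positive amount, so each $\pi_b\colon\cX_b\to\cC_b$ has only smooth fibres; in particular (miracle flatness plus smoothness of the geometric fibres) $\pi\colon\cX\to\cC$ is a smooth proper morphism, and the $j$-invariant of $\cX_b$, being a morphism $\cC_b\to\Ps^1$ that avoids $\infty$, is constant. Thus each $\cX_b\to\cC_b$ is a twisted form of a constant elliptic surface. Suppose for contradiction that $\cX_b$ is trivial for some $b$. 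Choose a discrete valuation ring $R$ with fraction field $K(\eta)$ whose centre on $\cB$ is $b$ (so its residue field $\kappa$ contains $K(b)$, and $R\supset\Q$, hence has residue characteristic $0$), and pull the family back along $\Spec R\to\cB$. The special fibre is $(\cX_b)_\kappa$, still trivial; the generic fibre is $\cX_\eta$; and $\cX_R\to\cC_R$ is a smooth elliptic fibration over $\cC_R$ with $j$-invariant constant along $\cC_R$ and lying in $R$. Hence $E_\eta/K(\eta)(\cC_\eta)$ becomes isomorphic over $\overline{K(\eta)(\cC_\eta)}$ to a constant curve coming from an elliptic curve $E_0/R$.

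The isomorphism class of $E_\eta$ among such forms is given by a class in $H^1\big(K(\eta)(\cC_\eta),\Aut\big)$, where $\Aut$ is the origin-fixing automorphism group of $E_0$ over $\overline{K(\eta)(\cC_\eta)}$: it is $\mu_2$ generically, and $\mu_4$ or $\mu_6$ when $j\equiv 1728$ or $0$. Because $\cX_R\to\cC_R$ (and the constant model) have good reduction at every point of $\cC_R$, the class is everywhere unramified on $\cC_R$, and in each case one checks that, after quotienting by the constant twists (which yield trivial surfaces over $\cC_\eta$), it is determined by a torsion class of order dividing $2$, $4$, or $6$ in $\mathrm{Jac}(\cC_R)(R)$, and that $\cX_\eta$ is trivial iff this class is $0$. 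Since $R$ has residue characteristic $0$, the group scheme $\mathrm{Jac}(\cC_R)[n]$ is finite \'etale over $\Spec R$, so the reduction map $\mathrm{Jac}(\cC_R)[n](R)\to\mathrm{Jac}(\cC_\kappa)[n](\kappa)$ is injective; thus a class nonzero over $\eta$ stays nonzero over $\kappa$, so $(\cX_b)_\kappa$ — and therefore $\cX_b$ — is not trivial, a contradiction. The hard part is exactly this last case: establishing that ``good reduction everywhere on $\cC_R$'' forces the twisting datum into the (prime-to-residue-characteristic) torsion of $\mathrm{Jac}(\cC_R)$, uniformly in the $\mu_2$, $\mu_4$, $\mu_6$ cases and when the value of $j$ specialises into $\{0,1728\}$; once that is in place, injectivity of specialisation on torsion finishes the argument, and combined with the first paragraph this proves the proposition.
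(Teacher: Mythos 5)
Your first paragraph is fine as far as it goes: $\chi(\cO)$ is constant in the flat proper family and vanishes for trivial elliptic surfaces, so the case $\chi(\cO_{\cX_\eta})\neq 0$ is immediate. The problem is that $\chi(\cO)=0$ does \emph{not} characterize triviality (any everywhere-unramified non-constant twist of a constant elliptic surface over a curve of positive genus is a smooth, hence $\chi(\cO)=0$, but non-trivial fibration), and this forces you into the long second case, which is where the proof has a genuine gap. The entire weight of that case rests on the claim that an everywhere-good-reduction form of a constant elliptic curve over $\cC_R$ is classified, modulo constant twists, by a class in $\mathrm{Jac}(\cC_R)[n]$ with $n\in\{2,4,6\}$, that triviality is equivalent to the vanishing of this class, and that the class of the special fibre is the reduction of the class of the generic fibre. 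You assert this with ``one checks that'' and then explicitly concede that ``the hard part is exactly this last case'' and that the proposition follows ``once that is in place.'' As written, the crux is therefore postponed rather than proved: one would need to set up the torsor formalism $H^1_{\mathrm{et}}(\cC_R,\mu_n)$ carefully (including the Kummer-sequence comparison with $\mathrm{Pic}(\cC)[n]$ over a non-closed field without an a priori rational point on $\cC_b$, the cases $j\in\{0,1728\}$ where $\Aut\neq\mu_2$, and the compatibility of the classifying class with specialisation) before the injectivity of reduction on torsion can be invoked.

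All of this machinery is avoidable, and the paper's own proof is a two-line argument with a different invariant: instead of $\chi(\cO)$, use the first Betti number (equivalently the irregularity). By \cite[Lemma IV.1.1]{MiES}, a trivial elliptic surface has $b_1(\cX_b)=2g(\cC_b)+2$ while a non-trivial one has $b_1(\cX_b)=2g(\cC_b)$; since $\cX\to\cB$ and $\cC\to\cB$ are smooth and proper, both $b_1(\cX_b)$ and $g(\cC_b)$ are constant over $\cB$, so either every fibre (including the generic one) is trivial or none is. The moral is that $b_1$ separates trivial from non-trivial fibrations outright, whereas $\chi(\cO)$ only does so ``generically,'' which is exactly why your argument collapses into the hard isotrivial case. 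I would either replace your proof by the $b_1$ argument, or, if you want to keep the twisting analysis, write out the classification of unramified forms in full rather than gesturing at it.
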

\begin{proof} Recall that $\cX\to \cB$  and $\cC\to \cB$ are both  smooth morphism. By \cite[Lemma IV.1.1]{MiES}, a trivial elliptic surface has $b_1(\cX_b)=2g(\cC_b)+2$, whereas a nontrivial elliptic surface has $b_1(\cX_b)=2g(\cC_b)$. Since both $g(\cC_b)$ and $b_1(\cX_b)$ are constant on $\cB$ we find that either all closed fibers are trivial and so is  the generic fiber trivial or none of the closed fibers is trivial.
\end{proof}

We have the following result from Maulik and Poonen \cite[Proposition 3.6(a)]{MauPoo}:
\begin{proposition}
With the same notation as before, we have that the specialization map
\[ \spa: \NS(\mathcal{X}_{\overline{\eta}})\to \NS(\mathcal{X}_{\overline{b}})\]
is injective with torsion-free cokernel.
\end{proposition}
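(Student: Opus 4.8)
The plan is to reduce to the case where the base is a discrete valuation ring, to construct $\spa$ by extending line bundles across the total space, to derive injectivity from $\ell$-adic cohomology together with the smooth and proper base change theorem, and to derive torsion-freeness of the cokernel from the deformation theory of line bundles. For the reduction: $\mathcal{B}$ is integral with function field $K(\eta)$ and $b$ is a closed point, so $\mathcal{O}_{\mathcal{B},b}$ is a regular local domain with fraction field $K(\eta)$; localising a blow-up of its maximal ideal at a height-one prime lying over $b$ yields a discrete valuation ring $R$ with $\operatorname{Frac}(R)=K(\eta)$, dominating $\mathcal{O}_{\mathcal{B},b}$, whose residue field is an extension of $K(b)$. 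Put $\mathcal{X}_R:=\mathcal{X}\times_{\mathcal{B}}\Spec R$, a smooth projective $R$-scheme with geometrically irreducible fibres, with generic point $\xi$ and closed point $s$. Because $K(\eta)$ and $K(b)$ are finitely generated over $\Q$, the N\'eron--Severi group of a smooth projective variety over either field is unchanged on enlarging the algebraically closed ground field, so $\NS(\mathcal{X}_{\overline\eta})=\NS((\mathcal{X}_R)_{\overline\xi})$ and $\NS(\mathcal{X}_{\overline b})=\NS((\mathcal{X}_R)_{\overline s})$, and it suffices to treat the specialisation map of $\mathcal{X}_R\to\Spec R$. All residue fields here have characteristic $0$.

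To construct $\spa$, represent a class $x\in\NS((\mathcal{X}_R)_{\overline\xi})$ by a line bundle defined over the generic fibre of $\mathcal{X}_{R'}$ for some discrete valuation ring $R'$ finite over $R$. Since $R'$ is regular and $\mathcal{X}_{R'}\to\Spec R'$ is smooth, the total space $\mathcal{X}_{R'}$ is regular; hence that line bundle extends, uniquely, to $\mathcal{X}_{R'}$ by taking the closure of a Weil divisor representing it — uniquely because two extensions differ by a divisor supported on the irreducible, reduced special fibre, hence by an integer multiple of $[(\mathcal{X}_{R'})_s]$, and $[(\mathcal{X}_{R'})_s]=0$ in $\operatorname{Pic}(\mathcal{X}_{R'})$ as the special fibre is the pullback of a principal divisor on $\Spec R'$. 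Restricting this extension to the special fibre and passing to $\NS((\mathcal{X}_R)_{\overline s})$ gives a well-defined map $\spa$, and its image is precisely the set of classes on $(\mathcal{X}_R)_{\overline s}$ that lift to a line bundle on $\mathcal{X}_{R'}$ for some finite $R'/R$; by Grothendieck's existence theorem this is the same as the set of classes whose representing line bundle extends over every infinitesimal neighbourhood of the special fibre.

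Injectivity follows from $\ell$-adic cohomology. Fix a prime $\ell$. The cycle class maps $\NS(-)\otimes\Z_\ell\to H^2_{\mathrm{et}}((-),\Z_\ell(1))$ are compatible with $\spa$ and with the isomorphism $H^2_{\mathrm{et}}((\mathcal{X}_R)_{\overline\xi},\Z_\ell(1))\xrightarrow{\ \sim\ }H^2_{\mathrm{et}}((\mathcal{X}_R)_{\overline s},\Z_\ell(1))$ furnished by smooth and proper base change over the strictly henselian trait. Via the Kummer sequence, and using that $\operatorname{Pic}^0$ is $\ell$-divisible and $\NS$ is finitely generated, the cycle class map gives an injection $\NS(X)\otimes\Z_\ell\cong\varprojlim_n\operatorname{Pic}(X)/\ell^n\hookrightarrow H^2_{\mathrm{et}}(X,\Z_\ell(1))$ for either geometric fibre $X$. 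Hence $\spa\otimes\Z_\ell$ is injective for every $\ell$, so $\ker(\spa)\otimes\Z_\ell=0$ for every $\ell$ by flatness of $\Z_\ell$, and therefore $\ker(\spa)=0$. (Alternatively: $\mathcal{X}_R\to\Spec R$ has good reduction, so $\operatorname{Pic}^0_{\mathcal{X}_R/R}$ is an abelian scheme, hence open and closed in the separated scheme $\operatorname{Pic}_{\mathcal{X}_R/R}$; the locus where a section of the latter lies in $\operatorname{Pic}^0$ is then open and closed in the connected base $\Spec R$, and so is detected at $s$ exactly when it is detected at $\xi$.)

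It remains to show that the cokernel of $\spa$ is torsion-free, i.e.\ that the subgroup $\operatorname{im}(\spa)\subseteq\NS((\mathcal{X}_R)_{\overline s})$ is saturated. Here I would use the deformation theory of line bundles: for a class $x$ on the special fibre the obstruction to extending a representing line bundle one infinitesimal step further lies in $H^2$ of the structure sheaf of the special fibre, a vector space over a field of characteristic $0$. The Kodaira--Spencer analysis of these obstructions in a smooth family over a discrete valuation ring of characteristic $0$ shows that the classes which extend over $\mathcal{X}_{R'}$ are exactly those whose de Rham first Chern class $c_1(x)\in H^1((\mathcal{X}_R)_{\overline s},\Omega^1)$ lies in a fixed subspace $W$, namely the classes horizontal for the Gauss--Manin connection of the family. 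Since $W$ is a subspace and $c_1$ is additive with torsion-free target, $c_1^{-1}(W)$ is a saturated subgroup: if $mx\in c_1^{-1}(W)$ then $m\,c_1(x)\in W$, so $c_1(x)\in W$ and $x\in c_1^{-1}(W)$. Hence $\NS((\mathcal{X}_R)_{\overline s})/\operatorname{im}(\spa)$ is torsion-free. I expect this last step — establishing that extendability of a line bundle in the family is cut out by the single linear condition that its de Rham Chern class be Gauss--Manin horizontal, so that no higher obstruction can spoil saturation — to be the main obstacle; it is precisely the deformation-theoretic heart of \cite{MauPoo}.
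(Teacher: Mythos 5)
The paper gives no argument for this proposition at all: it is quoted directly from \cite{MauPoo} (Proposition 3.6(a)), so there is no internal proof to measure yours against. What you have written is essentially a reconstruction of the proof in that reference, and it follows the same architecture: reduce to a discrete valuation ring of residue characteristic zero (using invariance of $\NS$ under extension of algebraically closed fields to absorb the change of residue field), define $\spa$ by extending a line bundle over the regular total space (unique up to the principal divisor cut out by the special fibre), obtain injectivity from the cycle class map into $H^2_{\mathrm{et}}(-,\Z_\ell(1))$ together with smooth proper base change, and obtain saturation of the image from the fact that, in characteristic zero, liftability of a line bundle to all infinitesimal neighbourhoods is cut out by a linear condition on its de Rham Chern class. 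Your outline is sound, and the one step you explicitly defer --- that liftability to all orders is equivalent to the single linear condition of Gauss--Manin horizontality, rather than to an infinite sequence of obstructions each depending on the choice of lift at the previous stage --- is indeed the substantive deformation-theoretic input of \cite{MauPoo}; since the paper outsources the entire proposition to that source, deferring this point there is consistent with the paper's own level of detail. Two small points to tighten if you wrote this out in full: Grothendieck existence algebraizes the formal line bundle only over the completion of $R'$, so an approximation or spreading-out step is needed to descend to a ring with the intended fraction field; and, as you correctly note, $\ker(\spa)\otimes\Z_\ell=0$ for a single $\ell$ only kills the free part and the $\ell$-torsion, so quantifying over all primes $\ell$ is genuinely necessary.
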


One easily shows the following
\begin{lemma} \label{lemTri}
With the same notation as before, we have that $\spa(T_\eta)\subset T_b$.
\end{lemma}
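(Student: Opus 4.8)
The plan is to verify the inclusion $\spa(T_\eta)\subset T_b$ on each of the three kinds of generators of $T_\eta$ separately, using that $\spa$ is a group homomorphism. First I would base change along a map $\Spec R\to\cB$ from the spectrum of a discrete valuation ring $R$ sending the generic point to $\eta$ and the closed point to $b$; this does not change the geometric N\'eron--Severi groups of $\cX_{\overline\eta}$ and $\cX_{\overline b}$, and it is how one computes the specialization map, so we may assume $\cB=\Spec R$. After a further finite extension of $R$, so that the relevant subvarieties are defined over $\mathrm{Frac}(R)$, the map $\spa$ sends the class of an effective divisor $D_\eta$ on $\cX_{\overline\eta}$ to the class of $\overline D\cdot\cX_{\overline b}$, where $\overline D\subset\cX$ is the Zariski closure of $D_\eta$; this intersection is again an effective divisor, since the Cartier divisor $\cX_{\overline b}$ does not contain $\overline D$ (the latter dominates $\Spec R$). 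In particular $\spa$ preserves effectivity and intersection numbers.

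Next I would dispatch the two easy generators. For the image of the zero section, $\sigma_0:\cC\to\cX$ is a morphism of $\cB$-schemes, so the Zariski closure of $\sigma_0(\cC_{\overline\eta})$ is $\sigma_0(\cC)$, whose restriction to $\cX_{\overline b}$ is $\sigma_0(\cC_{\overline b})$; hence $\spa$ carries its class to a generator of $T_b$. For the class $F_\eta$ of a fiber, choose a point $P\in\cC_{\overline\eta}$; by the valuative criterion of properness $P$ extends (after enlarging $R$) to a section of $\cC\to\Spec R$ that specializes $P$ to a single reduced point $q$ of the special fiber, and $\overline{\pi_\eta^{-1}(P)}$ is the preimage under $\pi$ of that section, which restricts to $\pi_b^{-1}(q)$; so $\spa(F_\eta)$ is the fiber class of $\pi_b$, which lies in $T_b$.

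The remaining case is a component $\Theta_\eta$ of a singular fiber of $\pi_\eta$ not meeting the zero section. Let $\Delta\subset\cC$ be the relative discriminant divisor, so that $\pi$ is smooth exactly over $\cC\setminus\Delta$. Then $\Theta_\eta$ is contained in $\pi_\eta^{-1}(\Delta_\eta)$, hence in the closed set $\pi^{-1}(\Delta)$, so its closure $\overline\Theta$ lies in $\pi^{-1}(\Delta)$ and $\spa[\Theta_\eta]=[\overline\Theta\cdot\cX_{\overline b}]$ is an effective divisor supported on finitely many fibers of $\pi_b$. I would then conclude with the elementary observation that every effective divisor supported on finitely many fibers of $\pi_b$ has class in $T_b$: a whole smooth fiber has class $F_b\in T_b$; the components of a reducible fiber not meeting the zero section are generators of $T_b$; and for the identity component $\Theta_{w,0}$ of a reducible fiber $\pi_b^{-1}(w)=\Theta_{w,0}+\sum_{i\ge1}m_i\Theta_{w,i}$ — which occurs with multiplicity one, because the zero section meets the fiber transversally in a single point — the relation $[\Theta_{w,0}]=F_b-\sum_{i\ge1}m_i[\Theta_{w,i}]$ in $\NS(\cX_{\overline b})$ shows that $[\Theta_{w,0}]\in T_b$. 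Combining the three cases gives $\spa(T_\eta)\subset T_b$.

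The part that I expect to need the most care is not a single computation but the setup around the specialization map: arranging the reduction to a discrete valuation ring so that the resulting map is the one in the statement, and checking that $\spa$ respects effectivity and intersection products in precisely the form used above, which come out of the construction underlying the Maulik--Poonen specialization result recalled earlier. The one genuinely necessary piece of geometry is the multiplicity-one statement for the identity component of a reducible Kodaira fiber, without which the final relation would hold only after tensoring with $\Q$.
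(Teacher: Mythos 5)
Your proof is correct and follows essentially the same route as the paper's (which checks the three kinds of generators: zero section to zero section, fiber class to fiber class, and fiber components specializing into fibers of $\pi_b$). You simply make explicit the details the paper leaves implicit, notably the reduction to a discrete valuation ring and the observation that the class of the identity component of a reducible fiber still lies in $T_b$ via the relation $[\Theta_{w,0}]=F_b-\sum_{i\ge 1}m_i[\Theta_{w,i}]$.
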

\begin{proof}
This specialization map obviously maps the class of the zero section to the class of the section zero and the class of a fiber to a class of a fiber. Moreover, the specialiatzion of a fiber component is contained in a single fiber of the specialized surface, hence $\spa(T_\eta)\subset T_b$.
\end{proof}

The following statements can be found in \cite[Chapter 5 and Section 6.1]{SSbook}:
Let $\pi: X\to C$ be an elliptic surface over a field $K$. Let $\Delta$ be the discriminant. For $p\in \Delta({\oK})$ consider the dual graph of $\pi^{-1}(p)$ and eliminate the component intersecting the image of the zero section. Call this graph $\Gamma_p$. Then this graph is of type $A_n,D_m, E_6,E_7$ or $E_8$. Let $\Lambda_p$ be the associated lattice. Then
\[ T=\Lambda'\oplus^{\perp}_{p\in \Delta({\oK})} \Lambda_p\]
where $\Lambda'$ is rank two lattice generated by  classes $F$ and $Z$, with $F^2=0$, $Z^2=-\chi(\cO_X)$, $F.Z=1$.

\begin{definition} Let $\pi: X \to C$ be an elliptic surface with section over a field $K$. The \emph{configuration of singular fibers} is a finite  multiset $M$, whose elements are formal symbols $A_n$, with $n\in \Z_{>0}$, $D_m$, with $m\in \Z_{>0}$, $E_6$, $E_7$, $E_8$, such that the multiplicity of $\Lambda$ in $M$ equals the number of $p\in C(\overline{K})$ such that $\Gamma_p$ is isomorphic to the graph with label $\Lambda$.
\end{definition}

\begin{lemma}\label{lemSpeInj} Let $\cB'$ be the subscheme of $\cB$ such that for all $b\in |\cB'|$ the configuration of singular fibers of $\pi_b:\cX_b\to \cC_b$ equals the configuration of singular fibers of $\pi_\eta:\cX_\eta\to \cC_\eta$. Then $\cB'$ is nonempty open and for all closed points $b\in |\cB'|$ we have 
\[ \spa(T_\eta)=\spa(T_b.)\]
In particular, we have that for all $b\in |\cB'|$ the specialization maps
\[E_{\eta}({\overline{K(\eta)}}(C_\eta)) \to E_b(\overline{K(b)}(C_b)) \mbox{ and }E_{\eta}({K(\eta)}(C_\eta)) \to E_b({K(b)}(C_b))\]
are injective.
\end{lemma}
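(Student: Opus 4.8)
The plan is to prove the three assertions in turn: $\cB'$ is nonempty open, $\spa(T_\eta)=T_b$ for every $b\in|\cB'|$, and then deduce injectivity of the two specialisation maps by a short diagram chase once the first two are in place.

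First I would check that the configuration of singular fibres is a constructible function on $\cB$: the discriminant divisor $\Delta\subset\cC$ is flat of relative dimension $0$ over $\cB$, of constant fibrewise degree $12\chi(\cO_{\cX_b})$, and for $p\in\Delta_b$ the Kodaira type of $\pi_b^{-1}(p)$ — hence the graph $\Gamma_p$ and the lattice $\Lambda_p$ — is read off from the vanishing orders of the Weierstrass data by Tate's algorithm (\cite[Chapter 5]{SSbook}); this stratifies $\cB$ into finitely many locally closed subsets on which the configuration is constant. Next I would observe that the configuration can only degenerate under specialisation: along a trait mapping to $\cB$, no new point of the discriminant can appear on the closed fibre because good reduction is an open condition, and the fibre over each limit of discriminant points is a degeneration of the corresponding fibre(s). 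Since the generic point $\eta$ generises every point of $\cB$, the configuration of $\cX_\eta$ is minimal for this degeneration order, so $\cB\setminus\cB'$ — the locus where the configuration differs from that of $\cX_\eta$ — is stable under specialisation. A constructible subset stable under specialisation is closed, hence $\cB'$ is open; and $\cB'$ is nonempty because it contains the closed points of the dense locally closed stratum containing $\eta$.

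Now fix $b\in|\cB'|$. By Lemma~\ref{lemTri} we have $\spa(T_\eta)\subseteq T_b$, and by \cite[Proposition 3.6(a)]{MauPoo} the specialisation map on N\'eron--Severi groups is injective; it is moreover compatible with the intersection pairing, a standard property of specialisation of cycle classes. Hence $\spa$ restricts to an isometric embedding $T_\eta\hookrightarrow T_b$, so $T_b/\spa(T_\eta)$ is finite and $[T_b:\spa(T_\eta)]^2=\operatorname{disc}(T_\eta)/\operatorname{disc}(T_b)$ (both discriminants nonzero, since $T$ is nondegenerate). But $b\in|\cB'|$ means $\cX_b$ and $\cX_\eta$ have the same configuration, and from the decomposition $T=\Lambda'\oplus^{\perp}\bigoplus_p\Lambda_p$ — in which $\operatorname{disc}(\Lambda')=-1$ regardless of $\chi(\cO_X)$, while $\operatorname{disc}(\Lambda_p)$ depends only on the type of $\Gamma_p$ — we get $\operatorname{disc}(T_\eta)=\operatorname{disc}(T_b)$. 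Therefore $[T_b:\spa(T_\eta)]=1$, that is, $\spa(T_\eta)=T_b$.

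Finally, since $\cX_\eta$ (hence, by the proposition above, every $\cX_b$) is a nontrivial elliptic surface, Proposition~\ref{prpST} supplies short exact sequences
\[ 0\to T_\eta\to\NS(\cX_{\overline{\eta}})\to E_\eta(\overline{K(\eta)}(C_\eta))\to 0 \]
\[ 0\to T_b\to\NS(\cX_{\overline{b}})\to E_b(\overline{K(b)}(C_b))\to 0 \]
fitting into a commutative ladder with vertical maps induced by $\spa$. The left vertical map is the isomorphism just proved and the middle one is injective, so a diagram chase — lift an element of the kernel of $E_\eta(\overline{K(\eta)}(C_\eta))\to E_b(\overline{K(b)}(C_b))$ to $\NS(\cX_{\overline{\eta}})$, note that its image in $\NS(\cX_{\overline{b}})$ lies in $T_b=\spa(T_\eta)$, and apply injectivity of $\spa$ — shows the geometric specialisation map is injective. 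The arithmetic map $E_\eta(K(\eta)(C_\eta))\to E_b(K(b)(C_b))$ then sits in a commutative square with this injective geometric map and with the inclusions $E(K(C))\hookrightarrow E(\overline{K}(C))$, and is therefore injective as well. The step I expect to require the most care is the equality $\spa(T_\eta)=T_b$, and in particular its two inputs: that $\spa$ is a lattice isometry, and that $\operatorname{disc}(T)$ is determined by the configuration together with $\chi(\cO_X)$, the latter being constant on $\cB$ because $\cX\to\cB$ is flat; the constructibility-and-specialisation bookkeeping needed for openness is routine.
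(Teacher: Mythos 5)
Your proof is correct and follows essentially the same route as the paper's: $\spa(T_\eta)\subseteq T_b$ is a finite-index sublattice by equality of ranks and has index one by equality of discriminants (the paper phrases this as $\det(T_b)=\det(\spa(T_\eta))$, you make the isometry property and the $\operatorname{disc}(\Lambda')=-1$ computation explicit), and injectivity then follows from the Shioda--Tate quotient description exactly as in the paper's chain of maps. The only cosmetic difference is the openness of $\cB'$, which you derive from constructibility plus stability of the complement under specialisation, whereas the paper reads the fibre types off the valuations $(v(j),v(c_4),v(c_6))$ via Tate's algorithm; both are at the same level of detail and equally valid.
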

\begin{proof}
Let $b\in |\cB'|$ then $\spa(T_\eta)$ is a sublattice of $T_b$ by the previous lemma. 
Since the configuration of singular fibers are the same we have that $\rank(T_b)=\rank (\spa(T_\eta))$, hence image has finite index, and that $\det(T_b)=\det(\spa(T_\eta))$, hence this index is one. In particular, $\spa(T_\eta)=T_b$ for all $b\in \cB'$.

Using Tate's algorithm \cite{Tate} one easily sees that the type of singular fibers can be determined by the valuation of three standard invariants of a Weierstrass equation $(v(j), v(c_4),v(c_6))$. In particular, the locus $\cB'$ is nonempty and open in $\cB$. 
Moreover,  for all $b \in |\cB|\setminus (Z'\cup Z'')$  we have the following chain of morphisms
\[ E_\eta(\overline{K(\eta)}(C_\eta)) \stackrel{\sim}{\longrightarrow} \NS(\mathcal{X}_{\overline{\eta}})/T_\eta \hookrightarrow \NS(\mathcal{X}_{\overline{b}})/T_b \stackrel{\sim}{\longrightarrow} E_b(\overline{K(b)}(C_b)).\]
This shows that the first (geometric) specialization map is injective. The second (arithmetic) specialization map is the restriction of the first one and is  also injective.
\end{proof}

\begin{definition}  Let $K$ be a finitely generated field over $\Q$. Let $\cB$ be a $K$-variety. Call a subset $S$ of $|\cB|$ \emph{sparse} if there exists a dominant and generically finite morphism $\pi:\cB_0\to \cB$ of irreducible $K$-varieties, such that for each $s\in S$  the fiber $\pi^{-1}(s)$ is empty or contains at least two closed points.
\end{definition}

\begin{definition} Let $K$ be a field of characteristic zero. Let $V/K$ be a  $K$-variety. A subset $S$ of $V(K)$ is called a \emph{thin subset of type $I$} if $S$ is contained in a Zariski closed subset of $V(K)$.
A subset $S$ of $V(K)$ is a called a \emph{thin subset of type II}, if there exists another $K$-variety $V'$ such that $\dim V=\dim V'$ and a finite morphism $\varphi : V'\to V$ of degree at least 2, such that $S\subset \varphi(V'(K))$.

A subset $S$ of $V(K)$ is \emph{thin} if it is a subset of a finite union of thin subsets of type I and type II.
\end{definition}
\begin{remark}
If $S$ is  a sparse subset of $|\As^n|$ then $S\cap \As^n(K)$ is  a thin set.
\end{remark}

\begin{proposition}\label{prpMain}  Let $K$ be a finitely generated field over $\Q$.
Then there is a sparse subset $Z\subset |\cB|$ such that for each $b\in |\cB|\setminus Z$ the specialization map
\[ E_{\eta}(\overline{K(\eta)}(C_\eta)) \to E_b(\overline{K(b)}(C_b))\]
is an isomorphism.
\end{proposition}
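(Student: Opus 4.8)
The plan is to use the Shioda--Tate isomorphism to rephrase the assertion as one about the specialization of N\'eron--Severi groups, and then to invoke \cite[Theorem~8.3]{MauPoo} together with the main result of \cite{And}. I would begin by restricting to the nonempty open subset $\cB'\subseteq\cB$ of Lemma~\ref{lemSpeInj}, consisting of those $b$ whose configuration of singular fibers agrees with that of $\cX_\eta$. For $b\in|\cB'|$ we have $\spa(T_\eta)=T_b$, and the geometric specialization map fits into the commutative chain
\[E_\eta(\overline{K(\eta)}(C_\eta))\xrightarrow{\ \sim\ }\NS(\cX_{\overline\eta})/T_\eta\hookrightarrow\NS(\cX_{\overline b})/T_b\xrightarrow{\ \sim\ }E_b(\overline{K(b)}(C_b)).\]
Since $\spa\colon\NS(\cX_{\overline\eta})\to\NS(\cX_{\overline b})$ is injective with torsion-free cokernel and carries $T_\eta$ onto $T_b$, the induced map on the quotients is an isomorphism exactly when $\spa$ is surjective, that is, exactly when $\rank\NS(\cX_{\overline b})=\rank\NS(\cX_{\overline\eta})$. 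Thus it suffices to prove that the jumping locus
\[Z_0:=\{\,b\in|\cB'|\ :\ \rank\NS(\cX_{\overline b})>\rank\NS(\cX_{\overline\eta})\,\}\]
is sparse in $|\cB|$.

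This is where \cite[Theorem~8.3]{MauPoo} enters: $\cX\to\cB$ is a smooth projective family over the field $K$, which is finitely generated over $\Q$, and that theorem (whose proof incorporates the main result of \cite{And}) asserts precisely that the set of $b$ at which $\rank\NS$ jumps is sparse. I expect verifying and applying this to be the main obstacle. Conceptually, over $\C$ the jumping locus is the Noether--Lefschetz locus, a countable (not finite) union of proper closed subvarieties by the Lefschetz $(1,1)$ theorem; Andr\'e's theorem controls how many Hodge classes can become algebraic and over which fields this algebraicity is witnessed, and Maulik--Poonen use this to assemble a single dominant, generically finite cover $\cB_0\to\cB$ having the property demanded by the definition of sparseness. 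One must of course check that our family satisfies the hypotheses of \cite[Theorem~8.3]{MauPoo} and that the invariant that jumps there is $\rank\NS(\cX_{\overline b})$.

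Finally one absorbs the complement $\cB\setminus\cB'$, a proper Zariski-closed subset of the irreducible variety $\cB$, which is itself sparse: on an affine open $U\subseteq\cB$ choose a nonzero regular function $f$ vanishing on $(\cB\setminus\cB')\cap U$ with $1+f$ not a square in $K(\cB)$, and let $\cB_0\to\cB$ be the normalization of the double cover $\{t^2=1+f\}$; over each point of $\cB\setminus\cB'$ the fiber of $\cB_0\to\cB$ consists of the two closed points $t=\pm1$. Finitely many such charts, together with the fact that a finite union of sparse sets is sparse, then handle $\cB\setminus\cB'$. Setting $Z:=(\cB\setminus\cB')\cup Z_0$ yields a sparse subset of $|\cB|$, and for $b\in|\cB|\setminus Z$ we have $b\in|\cB'|$ with no jump in $\rank\NS$, so by the first paragraph the geometric specialization map is an isomorphism.
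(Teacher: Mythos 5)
Your proposal is correct and follows essentially the same route as the paper: restrict to the open locus $\cB'$ where the fiber configuration is constant so that $\spa(T_\eta)=T_b$, pass through the Shioda--Tate chain $E_\eta(\overline{K(\eta)}(C_\eta))\cong\NS(\cX_{\overline\eta})/T_\eta\to\NS(\cX_{\overline b})/T_b\cong E_b(\overline{K(b)}(C_b))$, and invoke \cite[Theorem 8.3]{MauPoo} (with Proposition 3.6 there) to make the locus where $\spa$ on N\'eron--Severi fails to be an isomorphism sparse. The only cosmetic differences are that you phrase the Maulik--Poonen input via rank jumping rather than directly as surjectivity of $\spa$, and you build an explicit double cover to see that the closed set $\cB\setminus\cB'$ is sparse, where the paper simply notes that a proper Zariski-closed subset is sparse.
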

\begin{proof}
Let $\cB'$ be as in the previous lemma. Let $Z'=|\cB|\setminus| \cB'|$.
Since $\mathcal{X}\to B$ is smooth and projective, we have by \cite[Proposition 3.6 and Theorem 8.3]{MauPoo} that there is a sparse subset $Z''\subset |\cB|$, such that
\[ \spa: \NS(\mathcal{X}_{\overline{\eta}})\to \NS(\mathcal{X}_{\overline{b}})\]
is an isomorphism for all $b\in |\cB|\setminus Z''$.
Then for all $b \in |\cB|\setminus (Z'\cup Z'')$  we have isomorphisms
\[ E_\eta(\overline{K(\eta)}(C_\eta)) \stackrel{\sim}{\longrightarrow} \NS(\mathcal{X}_{\overline{\eta}})/T_\eta \stackrel{\sim}{\longrightarrow} \NS(\mathcal{X}_{\overline{b}})/T_b \stackrel{\sim}{\longrightarrow} E_b(\overline{K(b)}(C_b)).\]
Since $Z'$ is a Zariski-closed proper subset of $|\cB|$ and $Z''$ is sparse it follows that $Z'\cup Z''$ is sparse in $|\cB|$.
\end{proof}

We will now prove a similar statement for the specialization on the arithmetic Mordell-Weil groups, $E_{\eta}({K(\eta)}(C_\eta)) \to E_b({K(b)}(C_b))$. For this we aim to compare the $\Gal(\overline{k(\eta)}/k(\eta))$-action on $ E_{\eta}(\overline{K(\eta)}(C_\eta))$ and the $\Gal(\overline{k(b)}/k(b))$-action on $E_b(\overline{K(b)}(C_b))$.

\begin{remark}Fix a finite Galois extension $L$ of $K(\eta)=K(\cB)$ with Galois group $G$. Let $L_0$ be the algebraic closure of $K$ in $L$. Suppose first that $L_0 \neq L$.

Then $L=K(\cB')$, for some integral $K$-variety $\cB'$, which may be not geometrically integral. From the fact that $L_0\neq L$ it follows that  there is a finite rational map $\tau:\cB'\to\cB$ of degree at least 2. By shrinking $\cB$ and $\cB'$ if necessary we may assume that for $\tau$ is an unramified finite flat morphism.
In particular, for every $p\in |\cB|$ we have that $\tau^{-1}(p)$ consists of finitely many closed points  $p_1,\dots,p_t$. Each point $p_i$ yields  a unramified field extension $K(p_i)/K(p)$. Now, the locus where $t>1$ holds,  defines a sparse subset $S$ of $|\cB|\setminus Z$.

Hence for each point $p\in |\cB|\setminus (Z\cup S)$ we have that $t=1$ and that $K(p_1)/K(p)$ is Galois with group $G$. In this case we call $K(p_1)/K(p)$  the specialization of the Galois extension $L/K(\eta)$.

If $L_0=L$ then for every $b\in |\cB|$ we have that the extension $L(b)/K(b)$ is  Galois with group $G$.
\end{remark}

\begin{theorem} \label{mainThmB}
 Let $K$ be a finitely generated field over $\Q$.
 Let $\cX\to \cB$ be a family of elliptic surfaces with base curve $\cC$. Then there is a sparse subset $Z\subset |\cB|$, such that for every $b\in |\cB|\setminus Z$ the specialization map
\[ E_\eta(K(\eta)(\cC_\eta)) \to E_b(K(b)(\cC_b))\]
is an isomorphism.
\end{theorem}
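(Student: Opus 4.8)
The strategy is to upgrade the geometric statement, Proposition~\ref{prpMain}, to an arithmetic one by carrying the Galois actions along. Write $M_\eta=E_\eta(\overline{K(\eta)}(\cC_\eta))$ and $M_b=E_b(\overline{K(b)}(\cC_b))$; by Proposition~\ref{prpST} these are finitely generated and canonically identified with $\NS(\cX_{\overline\eta})/T_\eta$ and $\NS(\cX_{\overline b})/T_b$. Set $G_\eta=\Gal(\overline{K(\eta)}/K(\eta))$ and $G_b=\Gal(\overline{K(b)}/K(b))$. As $\cC_\eta$ and $\cC_b$ are geometrically integral, $G_\eta$ and $G_b$ act on $\NS(\cX_{\overline\eta})$ and $\NS(\cX_{\overline b})$, preserve the trivial lattices (whose generators form Galois-stable sets of classes), hence act on $M_\eta$ and $M_b$; and by Galois descent for sections one has $E_\eta(K(\eta)(\cC_\eta))=M_\eta^{G_\eta}$ and $E_b(K(b)(\cC_b))=M_b^{G_b}$. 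By Lemma~\ref{lemSpeInj} the arithmetic specialization $M_\eta^{G_\eta}\to M_b^{G_b}$ is injective for $b$ off a proper closed subset, and by Proposition~\ref{prpMain} the specialization $\spa\colon M_\eta\to M_b$ is an isomorphism of groups for $b$ off a sparse set $Z_0$. Hence it is enough to produce a sparse set off which $\spa$ carries $M_\eta^{G_\eta}$ onto $M_b^{G_b}$, and this will follow once we exhibit a finite group $G$ through which both $G_\eta$ (acting on $M_\eta$) and $G_b$ (acting on $M_b$) factor, in such a way that $\spa$ is $G$-equivariant.

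To produce such a $G$, the first step is to trivialize the Galois action on $\NS(\cX_{\overline\eta})$ after a finite base change. Choose divisors on $\cX_\eta$ whose classes generate the finitely generated group $\NS(\cX_{\overline\eta})$, and let $L/K(\eta)$ be a finite Galois extension, with $G:=\Gal(L/K(\eta))$, over which all of them are defined; then $G_\eta$ acts on $\NS(\cX_{\overline\eta})$ through $G$, so $M_\eta^{G_\eta}=M_\eta^{G}$. As explained in the Remark preceding the theorem, after deleting a proper closed subset of $\cB$ we may write $L=K(\cB_0)$ for a connected finite \'etale Galois cover $\tau\colon\cB_0\to\cB$ with group $G$; after shrinking $\cB$ and $\cB_0$ once more, the chosen divisors spread out to relative divisors $D_1,\dots,D_s$ on $\cX_0:=\cX\times_\cB\cB_0$ whose fiber classes generate $\NS((\cX_0)_{\overline{\eta_0}})=\NS(\cX_{\overline\eta})$, where $\eta_0$ denotes the generic point of $\cB_0$. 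The same Remark furnishes a sparse subset $S\subset|\cB|$ such that for $b\notin S$ the fiber $\tau^{-1}(b)$ is a single closed point $b_0$ with $K(b_0)/K(b)$ Galois of group $G$.

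Now fix $b$ outside $Z_0$, outside $S$, and outside the proper closed subsets deleted above. Since $\overline{L}=\overline{K(\eta)}$ and $\overline{K(b_0)}=\overline{K(b)}$ there are canonical identifications $(\cX_0)_{\overline{\eta_0}}=\cX_{\overline\eta}$ and $(\cX_0)_{\overline{b_0}}=\cX_{\overline b}$, and since $\tau$ is \'etale at $b_0$, functoriality of the Maulik--Poonen specialization map identifies the specialization map of $\cX_0/\cB_0$ at $b_0$ with $\spa$; in particular $\spa$ sends $[D_{i,\overline{\eta_0}}]$ to $[D_{i,\overline{b_0}}]$. As $b\notin Z_0$, these latter classes generate $\NS(\cX_{\overline b})$; being restrictions of relative divisors they are defined over $K(b_0)$, hence $G_b$ acts on $\NS(\cX_{\overline b})$ through $\Gal(K(b_0)/K(b))=G$ and $M_b^{G_b}=M_b^{G}$. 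Finally, each $\sigma\in G=\Aut(\cB_0/\cB)$ is an automorphism of $\cX_0/\cB_0$ lying over $\cX/\cB$ which fixes the points $\eta_0$ and $b_0$ and acts on their residue fields by the element of $\Gal(L/K(\eta))=G=\Gal(K(b_0)/K(b))$ attached to $\sigma$; functoriality of the specialization map then shows that $\spa$ intertwines the $G$-action on $M_\eta$ with the $G$-action on $M_b$. Therefore $\spa$ restricts to an isomorphism $M_\eta^{G}\stackrel{\sim}{\longrightarrow}M_b^{G}$, that is, $E_\eta(K(\eta)(\cC_\eta))\stackrel{\sim}{\longrightarrow}E_b(K(b)(\cC_b))$, for every $b$ outside the set $Z$ formed from $Z_0$, $S$ and the finitely many deleted closed subsets. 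Since a proper closed subset is sparse and a finite union of sparse subsets is sparse, $Z$ is sparse, as desired.

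The step I expect to be the main obstacle is the $G$-equivariance of $\spa$ used in the last paragraph: one must make precise that the Maulik--Poonen specialization map is functorial under base change of the family, hence compatible with the automorphisms coming from $\Aut(\cB_0/\cB)$ and, through the specialized extension $K(b_0)/K(b)$, with the action of the decomposition group at $b$ on $\NS(\cX_{\overline b})$. Once the generators of $\NS(\cX_{\overline\eta})$ are spread out over the Galois cover $\cB_0$ this amounts to bookkeeping with strict henselizations, or equivalently with the specialization homomorphism of \'etale fundamental groups; but one has to keep track of every locus removed along the way --- the singular-fiber locus of Lemma~\ref{lemSpeInj}, the locus where $\tau$ fails to be \'etale, the locus where some $D_i$ fails to extend to a relative divisor, the set $Z_0$ of Proposition~\ref{prpMain}, and the set $S$ --- and verify that each is sparse, and that $L$ (hence $\cB_0$) can be fixed once and for all.
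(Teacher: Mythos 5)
Your proof is correct, and its skeleton coincides with the paper's: restrict to the sparse complement where the geometric specialization of Proposition~\ref{prpMain} is an isomorphism, pass to a finite Galois extension $L/K(\eta)$ with group $G$ trivializing the Galois action on the geometric Mordell--Weil group, specialize that extension to $K(b)$ using the Remark preceding the theorem, and compare Galois invariants. The two arguments part ways only at the last step. You establish $G$-equivariance of the specialization map directly, by spreading the generators of $\NS(\cX_{\overline\eta})$ out to relative divisors over the \'etale Galois cover $\cB_0\to\cB$ and invoking functoriality of the Maulik--Poonen map under the $\Aut(\cB_0/\cB)$-action; taking $G$-invariants then finishes the proof in one stroke. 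The paper never states this equivariance explicitly: instead it calls a subgroup $H\le G$ ``relevant'' when $E_\eta(L(\cC_\eta))^H$ is strictly larger than $E_\eta(K(\eta)(\cC_\eta))$, and for each relevant $H$ it removes the locus $Z_H$ where the finitely many extra generators $Q_{H,1},\dots,Q_{H,t_H}$ specialize to $K(b)$-rational sections; each $Z_H$ is sparse because a non-rational section is witnessed by a cover of $\cB$ of degree at least $2$, which is the same mechanism that makes your set $S$ sparse. Your route is the more transparent one once the equivariance is in place, and you correctly isolate that as the technical crux (it does hold, by functoriality of the specialization map under base change along $\cB_0\to\cB$); the paper's route trades it for the combinatorics of relevant subgroups, but still tacitly needs the specialized arithmetic group to be of the form $E_\eta(L(\cC_\eta))^H$, so your version arguably makes explicit a point the paper leaves implicit. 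I see no gap in your argument.
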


\begin{proof}
Let $Z'$ be the sparse subset of $|\cB|$ such that on $|\cB|\setminus Z'$ the geometric specialization map 
\[ E_\eta(\overline{K(\eta)}(\cC_\eta)) \to E_b(\overline{K(b)}(\cC_b))\]
is an isomorphism.

Recall that the group $E_\eta(\overline{K(\eta)}(\cC_\eta))$ is finitely generated. Moreover, for each $P\in E_\eta(\overline{K(\eta)}(\cC_\eta))$ we have that $K(\eta)(\cC_\eta)(P)\subset \overline{K(\eta)}(\cC_\eta)$ is algebraic and finitely generated over $K(\eta)(\cC_\eta)$. In particular, there exists a minimal finite extension $L$ of $K(\eta)$ such that $L/K(\eta)$ is Galois and
\[ E_\eta(L(\cC_\eta))=E_\eta(\overline{K(\eta)}(\cC_\eta)).\]

Let $G=\Gal(L/K(\eta))$. Then there is a thin set $Z'$ such that for all $b\in |\cB|\setminus Z'$ the specialization $M$ of $L$ to $K(b)$ exists. Then $M/K(b)$ is Galois with group $G$ and $E_b(M(\cC_b))=E_b(\overline{K(b)}(\cC_b))$. 

Let $P_1,\dots,P_s$ be generators for $E_\eta({K(\eta)}(\cC_\eta))$. Let $H$ be a subgroup of the group $\Gal(L/K(\eta))$. If $E(\overline{K(\eta)}(\cC_\eta))^H = E_\eta(K(\eta)(\cC_\eta))$ then we call $H$ irrelevant, otherwise we call $H$ relevant.

 For every  relevant subgroup $H$ of $\Gal(L/K(\eta))$, let \[Q_{H,1},\dots Q_{H,t_H}\in E(\overline{K(\eta)}(\cC_\eta))^H\] be points such that 
$P_1,\dots,P_s,Q_{H,1},\dots, Q_{H,t_H}$ generate $E(\overline{K(\eta)}(C_\eta))^H$.

For a relevant subgroup $H$ of $G$ let $Z_H\subset |\cB|$ be the locus of all $b\in |\cB|$ such that $\spa(Q_{H,i})\in K(b)$ for all $i=1\dots,t_H$. Then for every $b\in |\cB|\setminus Z_H$ we have that $E_b(K(b)(\cC_b))\neq E_\eta(L(\cC_\eta))^H$.
Let \[Z=Z'\bigcup_{H \mathrm{ relevant}} Z_H.\]
Then for all $b \in |\cB|\setminus Z$ we have $E_b(K(b)(\cC_b))=E_\eta(L(\cC_\eta))^H$ for some subgroup $H$ of $G$ but since $b\not \in Z$, this group $H$ is irrelevant. In particular,
\[ E_b(K(b)(\cC_b))=E_\eta(L(\cC_\eta))^H=E_\eta(K(\eta)(\cC_\eta))\]
From the construction of $Z_H$ it follows that $Z_H$ is sparse and therefore $Z$ is sparse, since is a finite union  of sparse subsets.
\end{proof}

\section{Cowan's conjecture}\label{secCow}
In this section we want to use Theorem~\ref{mainThmB} to prove Cowan's conjecture and generalize this result to number fields. As explained in the Introduction we will use quadratic twists for this.

\begin{proposition} \label{prpTwist}Let $V$ be a $\Q$-vector space. Let $\rho: \Gal(\oK/K)\to \Aut(V)$ be a Galois representation, such that $\rho$ factors through a finite group.
For any $d\in  K^*/(K^*)^2$, let $\chi_d$ be the quadratic character associated with the field extension $K(\sqrt{d})/K$. Let $\rho^{(d)}:\Gal(\oK/K)\to \Aut(V)$ be the twisted Galois representation $\rho^{(d)}(g)=\rho(g) \circ (\chi_d(g) Id_V)$

Then there exists at most $\dim V$ many $d\in K^*/(K^*)^2$ such that $\rho^{(d)}$ has an invariant subspace.
\end{proposition}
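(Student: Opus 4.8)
The plan is to translate the hypothesis on $\rho^{(d)}$ into a statement about which characters occur inside $\rho$, and then to bound the number of such $d$ by $\dim_{\Q}V$ via a direct-sum argument. (We may assume $\dim_{\Q}V<\infty$, since otherwise there is nothing to prove.)

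First I would record what it means for $\rho^{(d)}$ to have a nonzero invariant subspace, by which I mean a nonzero subspace on which $\rho^{(d)}$ acts trivially; equivalently, a nonzero $v\in V$ with $\rho^{(d)}(g)v=v$ for all $g$. Because $\chi_d$ takes values in $\{\pm1\}$, so that $\chi_d(g)=\chi_d(g)^{-1}$, the relation $\chi_d(g)\rho(g)v=v$ is equivalent to $\rho(g)v=\chi_d(g)v$ for all $g$. Since $\rho$ factors through a finite group $\Gamma=\Gal(L/K)$, this forces $\chi_d$ to factor through $\Gamma$ as well (the value $\chi_d(g)$ is read off from the action of $\rho(g)$ on the line $\Q v$), and $\Q v$ is a subrepresentation of $\rho$ isomorphic to the character $\chi_d$; conversely such a line produces an invariant vector for $\rho^{(d)}$. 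So it is enough to show that the isotypic subspaces $V_{\chi_d}=\{v\in V:\rho(g)v=\chi_d(g)v\text{ for all }g\}$ are nonzero for at most $\dim_{\Q}V$ classes $d\in K^*/(K^*)^2$.

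Next I would use that $d\mapsto\chi_d$ is injective on $K^*/(K^*)^2$: by Kummer theory for $\mu_2$ over a field of characteristic zero, $\chi_{d_1}=\chi_{d_2}$ forces $K(\sqrt{d_1})=K(\sqrt{d_2})$, hence $d_1=d_2$ in $K^*/(K^*)^2$. Thus distinct ``bad'' $d$ give subspaces attached to pairwise distinct characters, and I would check that such subspaces are in direct sum inside $V$: take a shortest nontrivial relation $\sum_{i=1}^s v_i=0$ with $0\neq v_i\in V_{\chi_{d_i}}$ and $d_1,\dots,d_s$ distinct; then $s\ge2$, so one may pick $g$ with $\chi_{d_1}(g)\neq\chi_{d_s}(g)$, and subtracting $\chi_{d_s}(g)$ times the relation from $\rho(g)$ applied to it cancels the $s$-th term while leaving the first, producing a strictly shorter nontrivial relation --- a contradiction. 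Hence $\bigoplus_{d\text{ bad}}V_{\chi_d}\hookrightarrow V$, so there are at most $\dim_{\Q}V$ bad $d$. (Alternatively one can simply note that $\Q[\Gamma]$ is semisimple, whence the pairwise non-isomorphic simple submodules $\chi_{d_i}\subset V$ span their direct sum.)

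I do not expect a genuine obstacle here: the real content is the first step, where one must observe that twisting by the order-two character $\chi_d$ converts ``$\rho^{(d)}$ has a fixed vector'' into ``$\chi_d$ is a constituent of $\rho$'' --- this is exactly where $\chi_d^2=1$ is used. After that, the bound is the elementary fact that isotypic components for pairwise distinct characters are linearly independent. The hypothesis that $\rho$ factors through a finite group serves only to make ``$\chi_d$ occurs in $\rho$'' literally meaningful, and is automatic for the relevant $d$; the inequality itself does not require it.
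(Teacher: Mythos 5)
Your proof is correct, and you have also correctly identified the intended (slightly nonstandard) meaning of ``invariant subspace'' here, namely a nonzero subspace of fixed vectors --- this is the reading forced by the paper's own base case and by the application to Mordell--Weil groups. Your route is genuinely different from the paper's, though. You first convert ``$\rho^{(d)}$ has a nonzero fixed vector'' into ``$\chi_d$ occurs as a one-dimensional constituent of $\rho$'' (using $\chi_d^2=1$), then invoke injectivity of $d\mapsto\chi_d$ on $K^*/(K^*)^2$ and the linear independence of isotypic components for pairwise distinct characters, so the bad $d$ are counted directly by $\dim V$ in one step. The paper instead argues by induction on $\dim V$: if some twist $V^{(d)}$ has a fixed vector, Maschke's theorem splits off a trivial summand, $V^{(d)}=\Q\oplus V'$, and the inductive hypothesis applied to $V'$ plus the fact that the trivial character has a unique ``good'' twist gives the bound $1+(\dim V-1)$. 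The two arguments use the same underlying semisimplicity, but yours is non-inductive and makes the role of $\chi_d^2=1$ explicit, while also isolating the Kummer-theoretic injectivity of $d\mapsto\chi_d$, which the paper leaves implicit; the paper's induction is shorter to state but its base case and the bound ``at most $d-1$ twists'' (a typo for $\dim V-1$) are less transparent. Either way the statement is established.
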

\begin{proof}
Suppose first that $V$ is the trivial representation then for all $d\neq 1$ we have that $\rho^{(d)}(\cdot)=\chi_d(\cdot) Id_V$ has no invariant subspace. Similarly, if $V$ has a twist which is trivial then all other twists  (including the trivial one) have no invariant subspace.

We proceed now by induction on $\dim V$. The case $\dim V=1$ is covered by the above.
Suppose now that $\dim V>1$. Suppose that for some $d$ there is an invariant subspace then by Maschke's theorem we can decompose $V^{(d)}=\Q \oplus V'$. Then there are at most $d-1$ twists of $V'$ which have an invariant subspace and only the trivial twist of $\Q$ has an invariant subspace. 
\end{proof}

Consider now a family of elliptic surfaces $\cX\to\cC\to \cB$.
Given  a rational point $b\in B(K)$ consider 
\[ \tw(b):=\left\{ d\in K^*/(K^*)^2 \left| \begin{array}{c}\mbox{ there exists a point  } b'\in B(K)\\ \mbox{ such that }  \cX_b/\cC_b\not \cong \cX_{b'}/\cC_{b'} \\\mbox { and }
( \cX_b/\cC_b)_{K(\sqrt{d})} \cong (\cX_{b'}/\cC_{b'})_{K(\sqrt{d})} \end{array}\right.\right\} \]
the set of $d$ such that the quadratic twist of $\cX_b$ by $\sqrt{d}$ is also contained in the family $\cX\to \cB$.
 \begin{proposition}\label{prpRankVan}
  Let $K$ be a finitely generated field over $\Q$ and let $\mathcal{X}\to \mathcal{B}$ be a family of non-trivial elliptic sufaces with a section.

Let $b\in B(K)$ be a rational point such that $\tw(b)$ is infinite and the configuration of singular fibers of $\cX_b\to \cC_b$ is the same as $\cX_\eta\to \cC_\eta$.
Then $E_\eta(K(\eta)(\cC_\eta))$ is finite.
\end{proposition}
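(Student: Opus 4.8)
The plan is to exploit the infinitude of $\tw(b)$ to produce infinitely many quadratic twists of the single elliptic surface $\cX_b/\cC_b$ that all occur inside our family, and to combine this with the rank-constancy coming from Theorem~\ref{mainThmB} to force the generic geometric Mordell-Weil group to have no nontrivial Galois-invariant part beyond torsion. More precisely, write $r = \rank E_\eta(\overline{K(\eta)}(\cC_\eta))$ and let $L/K(\eta)$ be the minimal finite Galois extension with $E_\eta(L(\cC_\eta)) = E_\eta(\overline{K(\eta)}(\cC_\eta))$, as in the proof of Theorem~\ref{mainThmB}; the $\Gal(L/K(\eta))$-action on $V := E_\eta(\overline{K(\eta)}(\cC_\eta))\otimes\Q$ factors through a finite group. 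For $d\in\tw(b)$, the quadratic twist $(\cX_b/\cC_b)^{(d)}$ is isomorphic to some $\cX_{b'}/\cC_{b'}$ in the family with $b'\in\cB(K)$. Since by hypothesis $\cX_b$ has the same configuration of singular fibers as $\cX_\eta$, quadratic twisting does not change the configuration (outside of possibly the bad fibers over $0$ and $\infty$ relative to the twisting, which one must check stays inside the admissible locus $\cB'$), so $b,b'\in|\cB'|$, and by Theorem~\ref{mainThmB} (applied to the finitely many points $b, b'$, which we may assume avoid the sparse exceptional set $Z$ after possibly passing to a generically finite cover — here a small care is needed since $b$ is a fixed point, not a generic one) we get
\[ \rank E_b(K(b)(\cC_b)) = \rank E_{b'}(K(b')(\cC_{b'})) = \rank E_\eta(K(\eta)(\cC_\eta)). \]

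Next I would translate the twisting into representation theory via Proposition~\ref{prpTwist}. The arithmetic Mordell-Weil group of $\cX_b$ over $K(b)(\cC_b)$ is the $\Gal$-invariants of its geometric Mordell-Weil group, and since the geometric Mordell-Weil lattice of a quadratic twist $(\cX_b/\cC_b)^{(d)}$ is identified with that of $\cX_b/\cC_b$ with the Galois action twisted by $\chi_d$, the rank of $E_{b'}(K(b')(\cC_{b'}))$ equals $\dim (V_b^{(d)})^{\Gal}$ where $V_b$ is the geometric Mordell-Weil group of $\cX_b$ tensored with $\Q$, carrying its natural Galois representation $\rho_b$, and $d$ ranges over $\tw(b)$. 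Now $\rho_b$ factors through a finite group (the geometric Mordell-Weil group is finitely generated, Proposition~\ref{prpST}), so Proposition~\ref{prpTwist} applies: there are at most $\dim V_b$ values of $d\in K^*/(K^*)^2$ for which $\rho_b^{(d)}$ has a nonzero invariant subspace. Since $\tw(b)$ is infinite, there is some $d_0\in\tw(b)$ with $(V_b^{(d_0)})^{\Gal} = 0$, i.e. $E_{b'_0}(K(b'_0)(\cC_{b'_0}))$ has rank $0$ for the corresponding $b'_0$. Combining with the displayed rank equality gives $\rank E_\eta(K(\eta)(\cC_\eta)) = 0$, hence $E_\eta(K(\eta)(\cC_\eta))$ is finite.

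One subtlety to address carefully: Proposition~\ref{prpTwist} is about the \emph{geometric} representation $V_b$ of $\cX_b$, while the rank-transfer statement from Theorem~\ref{mainThmB} compares \emph{arithmetic} groups across the family; I need the geometric Mordell-Weil lattice of $\cX_b$ to actually be identified, compatibly with Galois, with that of $\cX_\eta$ (equivalently $V_b\cong V$ as representations after matching the Galois groups), which is exactly what the chain $\NS(\cX_{\overline\eta})/T_\eta \hookrightarrow \NS(\cX_{\overline b})/T_b$ of Lemma~\ref{lemSpeInj} gives provided $b$ lies outside the sparse set — and here lies the genuine difficulty, since $b$ is a fixed rational point and may a priori lie in the sparse exceptional locus. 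The remedy I would use is that the conclusion we want, finiteness of $E_\eta(K(\eta)(\cC_\eta))$, is a statement purely about the generic fiber, so it suffices to find \emph{one} point $b^\sharp$ in the family, not necessarily $b$ itself, where both (i) the twist family is infinite (which holds for any $b^\sharp$ on the same twist-orbit, since $\tw$ is constant along such orbits up to the torsor structure) and (ii) $b^\sharp \notin Z$; such a $b^\sharp$ exists because the infinitely many twists $b'$ cannot all lie in the sparse set $Z$. The main obstacle is therefore this bookkeeping between "generic" constancy results and a fixed rational base point, together with verifying that quadratic twisting genuinely preserves membership in $\cB'$ (the equal-configuration locus); once those are pinned down, Proposition~\ref{prpTwist} does the rest cleanly.
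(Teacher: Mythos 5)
Your core mechanism --- twisting by the infinitely many $d\in\tw(b)$ and invoking Proposition~\ref{prpTwist} to find one twist whose arithmetic Mordell--Weil group has rank zero --- is exactly the paper's. But the way you transport that conclusion back to the generic fibre has a genuine gap. You route the comparison through Theorem~\ref{mainThmB} (bijectivity of specialization), which only holds off a sparse set $Z$, and you then have to argue that some point of the twist orbit avoids $Z$. Your justification, ``the infinitely many twists $b'$ cannot all lie in the sparse set $Z$,'' is false as a general principle: sparse (resp.\ thin) sets can contain infinitely many closed (resp.\ rational) points. Worse, in the intended application the twist orbit $\{(d^2A,d^3B)\}$ is contained in a one-dimensional subvariety of $\cU_{m,n}$, which is itself a sparse subset of type I whenever $\dim\cB\geq 2$, so nothing prevents the entire orbit from sitting inside $Z$. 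Your fallback $b^\sharp$ therefore need not exist.

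The paper avoids this entirely: it only needs \emph{injectivity} of the specialization maps $E_\eta(K(\eta)(\cC_\eta))\to E_b^{(d)}(K(\cC_b))$, which is Lemma~\ref{lemSpeInj} and holds for every point of the \emph{open} locus $\cB'$ where the configuration of singular fibres agrees with the generic one --- no sparse set enters. The hypothesis places $b$ in $\cB'$, and since the configuration is a geometric ($\oK$-)invariant unchanged by a constant quadratic twist, every $b'$ in the twist orbit lies in $\cB'$ as well. Once Proposition~\ref{prpTwist} produces a $d_0\in\tw(b)$ with $E_b^{(d_0)}(K(\cC_b))$ finite, injectivity alone forces $E_\eta(K(\eta)(\cC_\eta))$ to be finite; no rank equality, and hence no appeal to Theorem~\ref{mainThmB}, is needed. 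Note also that you misstate Lemma~\ref{lemSpeInj} when you write that its injection requires $b$ to lie outside the sparse set: its only hypothesis is the equal-configuration condition, which is precisely what the proposition assumes. Replacing your bijectivity step by this one-directional injectivity closes the gap and recovers the paper's proof.
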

\begin{proof}
Our assumption on the configuration of singular fibers and Lemma~\ref{lemSpeInj} yeild  that the specialization map
\[ E_\eta(K(\eta)(\cC_\eta)) \to E_b(K(\cC_b))\]
is injective.
Moreover, the assumption on the configuration of singular fibers is formulated over $\oK$, hence for all $d\in \tw(b)$ we have that 
\[ E_\eta(K(\eta)(\cC_\eta)) \to E_b^{(d)}(K(\cC_b))\]
is injective.

Consider now the $\Gal(\oK/K)$-representation $V=E_{\eta}(\oK(t))\otimes_\Z \Q$. 
The Galois representation on $ E^{(d)}_{b}(K(\cC_b))\otimes_\Z \Q$ is precisely $V^{(d)}$. Hence if the rank of $E^{(d)}_{b}(K(\cC_b))$ is positive then $V^{(d)}$ has an invariant subspace. Since there are only finitely many $d\in K^*/(K^*)^2$ with this property it follows that there is a $d\in \tw(b)$ with $E^{(d)}_{b}(K(\cC_b))$ finite and therefore $E_\eta(K(\eta)(\cC_\eta))$ is finite.
\end{proof}

\begin{notation}
For a commutative ring $R$ let $R[t]_{ d}$ be the $R$-module of polynomials in $t$ with coefficients from $R$ and of degree at most $d$. Let $m,n$ be integers and let
\[S_{m,n}(R)=\{(A,B)\in R[t]_{m}\times R[t]_n | 4A^3+27B^2\neq 0\}.\]
Let $k=\lceil \max(m/4,n/6) \rceil$. Suppose now that $R$ is an integral domain not of characteristic 2 or 3. Let $K$ be its quotient field. To a pair $(A,B)\in S_{m,n}$ we can associate an elliptic curve $E_{(A,B)}/K(t)$ with Weierstrass equation
\[ y^2=x^3+A(t)x+B(t).\]
Moreover we can associate a hypersurface $W_{A,B}$ in in the $\Ps^2$-bundle $\Ps(\cO\oplus \cO(-2k)\oplus \cO(-3k))$ over $\Ps^1$, given by
\[ -Y^2Z+X^3+s^{4k}A\left(\frac{t}{s}\right) XZ^2+s^{6k}B\left(\frac{t}{s}\right)Z^3=0\]
\end{notation}
\begin{remark}
Consider the projection morphism $\psi: W_{A,B} \to \Ps^1$. Then for all $p\in \Ps^1$ the fiber over $p$ is irreducible and the general fiber is an elliptic curve.
If we resolve the singularities of $W_{A,B}$ and then collapse all $-1$ curve then we obtain an elliptic surface $X_{A,B}\to \Ps^1$ \cite[Lecture III.3]{MiES}.
 The collapsing of $-1$ curves is only necessary if the original Weierstrass equation is not minimal, or if both $\deg(A)\leq 4k-4$ and $\deg(B)\leq 6k-6$ hold.
 \end{remark}
 \begin{notation}
We define now the following subset 
\[ U_{m,n}(R)=\left \{(A,B)\in S_{m,n}(R) \left| \begin{array}{c}4A^3+27B^2 \mbox{ is smooth in } K[t] \mbox{ and} \\ \deg(4A^3+27B^2)=\max(3m,2n)\end{array}\right.\right \}.\]
In the following we will  identify $R[t]_{\leq n}\times R[t]_{\leq m}$ with $\As^{m+n+2}_R$. 
\end{notation}

\begin{lemma}
There exists subvarieties $Z'$ and $Z''$ of $\As^{m+n+2}$ such that $S_{m,n}(R)=\As^{m+n+2}(R)\setminus Z'(R)$ and $U_{m,n}(R)=\As^{m+n+2}(R) \setminus Z''(R)$. Moreover, $U_{m,n}(R)\neq \emptyset$.
\end{lemma}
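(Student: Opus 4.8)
The plan is to exhibit $Z'$ and $Z''$ explicitly as vanishing loci of polynomials in the coefficients of $A$ and $B$, and then verify nonemptiness of $U_{m,n}$ by producing a single pair $(A,B)$ lying in it.

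\medskip

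\noindent\textbf{Defining $Z'$.} Write $A=\sum_{i=0}^m a_i t^i$ and $B=\sum_{j=0}^n b_j t^j$, so that the coordinates on $\As^{m+n+2}$ are the $a_i,b_j$. The quantity $4A^3+27B^2$ is a polynomial in $t$ whose coefficients are (universal) polynomials in the $a_i$ and $b_j$ with integer coefficients; call them $c_0,\dots,c_{\max(3m,2n)}$. The condition $4A^3+27B^2\neq 0$ as an element of $R[t]$ is the condition that not all $c_\ell$ vanish, i.e. the complement of the closed subscheme $Z'=V(c_0,\dots,c_{\max(3m,2n)})\subset \As^{m+n+2}$. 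By construction $S_{m,n}(R)=\As^{m+n+2}(R)\setminus Z'(R)$ for every commutative ring $R$ of characteristic $\neq 2,3$; in fact this identity holds functorially, which is all we need.

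\medskip

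\noindent\textbf{Defining $Z''$.} The locus $U_{m,n}$ is cut out inside $\As^{m+n+2}$ by two extra conditions on $\Delta:=4A^3+27B^2$: that $\deg\Delta=\max(3m,2n)$, and that $\Delta$ is separable (smooth) in $K[t]$. The degree condition is the non-vanishing of the leading coefficient $c_{\max(3m,2n)}$, which (using $\cha K\neq 2,3$) is a nonzero monomial in the leading coefficients $a_m,b_n$; this is an open condition. Separability of $\Delta$ is equivalent to $\gcd(\Delta,\Delta')=1$, i.e. to the non-vanishing of the resultant $\mathrm{Res}_t(\Delta,\Delta')$, which is again a polynomial $R$ in the $a_i,b_j$. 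Hence $U_{m,n}$ is the complement of the closed set $Z''$ defined by the vanishing of the ideal $(c_0,\dots,c_{\max(3m,2n)})\cap$ [conditions forcing $\Delta$ non-separable or of lower degree]; concretely one can take $Z''=V\big(\,c_{\max(3m,2n)}\cdot\mathrm{Res}_t(\Delta,\Delta')\,\big)\cup Z'$, a closed subvariety of $\As^{m+n+2}$, and then $U_{m,n}(R)=\As^{m+n+2}(R)\setminus Z''(R)$ holds functorially in $R$ (of characteristic $\neq 2,3$). One should double-check that when the leading coefficient drops, the resultant is interpreted correctly; it is cleanest to first impose $c_{\max(3m,2n)}\neq 0$ and then, on that open set, the resultant has the expected degree and $Z''$ is its zero locus.

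\medskip

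\noindent\textbf{Nonemptiness of $U_{m,n}$.} It suffices to find one pair $(A,B)$ over, say, $\Q$ (or even $\mathbf{Z}$) with $\deg A=m$, $\deg B=n$, and $4A^3+27B^2$ separable of degree $\max(3m,2n)$. A clean choice is $B=0$ and $A=t^m-t$ (assuming $m\geq 1$; if $m=0$ take $A=1$ and $B=t^n-t$, and if both $m,n\geq1$ one can still take $B=0$): then $\Delta=4A^3$, which has degree $3m=\max(3m,2n)$ precisely when $2n\leq 3m$, and is separable iff $A$ is separable, which $t^m-t$ is over $\Q$. In the remaining case $2n>3m$ take $A=0$, $B=t^n-t$, so $\Delta=27B^2$ of degree $2n$, separable. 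A single such example shows $Z''\neq \As^{m+n+2}$, hence $U_{m,n}(\Q)\neq\emptyset$, and then $U_{m,n}(R)\neq\emptyset$ for any $R$ with $\Q\subset K$ as well. The only mild obstacle is organizing the three cases ($2n<3m$, $2n=3m$, $2n>3m$) and the degenerate low-degree cases $m=0$ or $n=0$ so that in each case the chosen $\Delta$ genuinely has the claimed degree and is separable; in the equality case $2n=3m$ one must be slightly careful since both $4A^3$ and $27B^2$ then contribute to the leading term, but choosing $B=0$ sidesteps this entirely.
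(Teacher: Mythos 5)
Your construction of $Z'$ and $Z''$ is fine and in fact more complete than the paper's own proof: the paper only records the vanishing of the leading coefficient of $4A^3+27B^2$ for $Z''$, while you correctly add the non-separability locus via $\mathrm{Res}_t(\Delta,\Delta')$. (One small slip: when $3m=2n$ the leading coefficient is $4a_m^3+27b_n^2$, a binomial rather than a monomial, but it is still a nonzero polynomial, so the openness argument stands.)

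The genuine gap is in your nonemptiness argument. With $B=0$ and $A=t^m-t$ you get $\Delta=4A^3=4(t^m-t)^3$, which is a perfect cube: every root of $A$ is a triple root of $\Delta$, so $\Delta$ is never squarefree once $A$ is non-constant. Your assertion that ``$4A^3$ is separable iff $A$ is separable'' is false --- a cube of a non-constant polynomial always has repeated roots. The same problem kills the case $A=0$, $B=t^n-t$, where $\Delta=27(t^n-t)^2$ is a square. (There is also the degenerate point that for $m=1$ your $A=t^m-t$ is identically zero.) So none of your witnesses actually lie in $U_{m,n}$. The fix is to put the pure power in the \emph{other} slot, as the paper does: if $2n\geq 3m$ take $(A,B)=(1,t^n)$, so $\Delta=27t^{2n}+4$ has degree $2n=\max(3m,2n)$ and is separable because $\Delta'=54n\,t^{2n-1}$ vanishes only at $t=0$, where $\Delta=4\neq 0$; if $2n<3m$ take $(A,B)=(t^m,1)$, so $\Delta=4t^{3m}+27$, and argue symmetrically. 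With that substitution the rest of your proof goes through.
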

\begin{proof}
The subvariety $Z'$ is defined by the property that all coefficients in $4A^3+27B^2$ vanish. Recall that for every  $(A,B)\in S_{m,n}(R)$ we have that $\deg(4A^3+27B^2)\leq \max (2n,3m)$. Hence $Z''$ is defined by the vanishing of the coefficient of $t^d$ in $4A^3+27B^2$, where $d=\max(2n,3m)$.
It remains to show that $U_{m,n}$ is nonempty: if $2n\geq 3m$ then $(1,t^n)\in U_{m,n}(R)$, if $2n<3m$ then $(t^m,1)\in U_{m,n}(R)$.
\end{proof}
\begin{notation}
For every field $K$ of characteristic zero there is  a subscheme of $\cU_{m,n} \subset \As^{n+m+2}_K$ such that 
$\cU_{m,n}(L)=U_{m,n}(L)$, for every field extension $L/K$. One easily checks that $\cU_{m,n}$ admits a model over $\Z$ such that $\cU_{m,n}(R)=U_{m, n}(R)$ for every integral domain $R$ of characteristic zero. 
\end{notation}

For fixed $(m,n)$  the singularities of $W_{A,B}$ depend on $(A,B)\in S_{m,n}(K)$ and this is an obstruction to construct a family of elliptic surfaces as in Defintion~\ref{defFamily}.
 We will now first show that for $(A,B)\in U_{m,n}(K)$ the Weierstrass model $W_{A,B}$ has at most one singular point, if for one pair $(A,B)$ this model is smooth then all $W_{A,B}$ are smooth and if $W_{A,B}$ is singular then the type of singularity depends only on $(m,n)$.
We will use this to show that $(X_{A,B})_{(A,B)\in U_{m,n}(K)}$ yields a family of elliptic surfaces with section and constant trivial lattice.

Recall that $k=\lceil \max(m/4,n/6) \rceil$.
Let $\alpha=4k-m$ and let $\beta=6k-n$. Note that $\alpha,\beta \geq 0$ and that $\alpha<4$ or $\beta<6$. 

\begin{lemma} Let $(A,B)\in U_{m,n}(K)$ then  $p_g(X_{A,B})=k-1$. Moreover, all singular fibers are of type $I_1$, except possibly for the fiber at $t=\infty$. At $t=\infty$ we have the following fiber depending only on $(\alpha,\beta)$:
\[\begin{array}{|c|c|c|c|c|c|c|c|c|c|c|}
\hline
\alpha & 0 & \geq 0 &\geq 1 &1 &\geq 2 &2& \geq 2& \geq 3 & 3 &\geq 4\\
\beta& \geq 0 & 0 & 1 &\geq 2 &2 & \geq 3 &3 & 4 &\geq 5 & 5\\
\hline
\mbox{fiber-type} &I_0&I_0&II & III& IV &I_0^*&I_0^*&IV^*&III^*&II^*\\
\mbox{singularity}& A_0&A_0&A_0&A_1&A_2 &D_4&D_4&E_6&E_7&E_8\\
\hline
\end{array}\]
\end{lemma}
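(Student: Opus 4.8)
The plan is to pass to the global Weierstrass model of $X_{A,B}$ over $\Ps^1$ and to read each singular fiber off from the local Weierstrass invariants $c_4,c_6,\Delta$; the only delicate fiber is the one over $t=\infty$. First I would check that, for $(A,B)\in U_{m,n}(K)$, the homogeneous equation defining $W_{A,B}$ is a \emph{minimal} Weierstrass model over $\Ps^1$, i.e.\ at no point do the sections $c_4=-48A\in H^0(\cO(4k))$ and $c_6=-864B\in H^0(\cO(6k))$ vanish to orders $\geq 4$ and $\geq 6$ simultaneously. At a finite point this is immediate from $\Delta_0:=4A^3+27B^2$ being squarefree (so $v(\Delta)\leq 1$ there). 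At $t=\infty$ one has $v_\infty(c_4)=4k-\deg A\geq\alpha$ and $v_\infty(c_6)=6k-\deg B\geq\beta$; since $\deg\Delta_0=\max(3m,2n)$ (the $U_{m,n}$-condition), at least one of $\deg A=m$, $\deg B=n$ must hold, and in fact $\deg A=m$ whenever $3m>2n$ and $\deg B=n$ whenever $2n>3m$. Combining this with the standing inequality $\alpha<4$ or $\beta<6$, a short case check gives $v_\infty(c_4)<4$ or $v_\infty(c_6)<6$, so the model is minimal; hence its fundamental line bundle is $\cO_{\Ps^1}(k)$ and $\chi(\cO_{X_{A,B}})=\deg\cO_{\Ps^1}(k)=k$. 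As $\Delta_0\neq 0$, $X_{A,B}$ has singular fibers and is non-trivial, so $q(X_{A,B})=g(\Ps^1)=0$ and therefore $p_g(X_{A,B})=\chi(\cO_{X_{A,B}})-1=k-1$.

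For the finite fibers: at a finite zero $t_0$ of $\Delta_0$ we have $v_{t_0}(\Delta)=1$, and in residue characteristic $0$ Tate's algorithm \cite{Tate} forces a fiber of type $I_1$, over which $W_{A,B}$ is smooth; away from the zeros of $\Delta_0$ the fiber is $I_0$ and $W_{A,B}$ is again smooth. Hence $W_{A,B}$ has at most one singular point, necessarily lying over $t=\infty$, and the ADE type of that rational double point is dictated by the Kodaira type of the fiber there through the standard correspondence $III\leftrightarrow A_1$, $IV\leftrightarrow A_2$, $I_0^*\leftrightarrow D_4$, $IV^*\leftrightarrow E_6$, $III^*\leftrightarrow E_7$, $II^*\leftrightarrow E_8$ (with $I_0$, $I_1$, $II$ giving a smooth point, recorded as $A_0$) \cite{SSbook}.

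It remains to determine the fiber at $\infty$. In the chart $t=1$ with local parameter $s$ the equation reads $y^2=x^3+a(s)x+b(s)$ with $v_s(a)=4k-\deg A$, $v_s(b)=6k-\deg B$ and, crucially, $v_s(\Delta)=12k-\deg\Delta_0=12k-\max(3m,2n)=\min(3\alpha,2\beta)$, the last equality again using $\deg\Delta_0=\max(3m,2n)$. Splitting into the three cases $3\alpha<2\beta$, $3\alpha>2\beta$, $3\alpha=2\beta$ and feeding in, in each case, which of $\deg A=m$, $\deg B=n$ is forced (so that the relevant one of $v_s(a)$, $v_s(b)$ equals $\alpha$ resp.\ $\beta$), one finds that the Kodaira type of the fiber at $\infty$ — in particular the value $v_s(\Delta)$ — depends only on $(\alpha,\beta)$, while the inequality $\alpha<4$ or $\beta<6$ restricts $(\alpha,\beta)$ to exactly the columns of the table. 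Running Tate's algorithm on this data then yields the stated Kodaira type in each column, and the previous paragraph's dictionary yields the singularity.

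The main obstacle is the bookkeeping in this last step when $\deg A<m$ or $\deg B<n$, which genuinely occurs for $(A,B)\in U_{m,n}$: then $v_s(a)$ or $v_s(b)$ strictly exceeds $\alpha$ resp.\ $\beta$, and one must verify that the Kodaira type is nevertheless unchanged. This works precisely because $v_s(\Delta)=\min(3\alpha,2\beta)$ is pinned down independently of $\deg A,\deg B$, so that, e.g., type $I_0^*$ cannot degenerate to some $I_n^*$ with $n>0$ and $IV^*$ cannot degenerate to an $I_n^*$. It is here — and not merely in the $I_1$-ness of the finite fibers — that one uses the full requirement $\deg\Delta_0=\max(3m,2n)$ built into the definition of $U_{m,n}$, rather than just the squarefreeness of $\Delta_0$.
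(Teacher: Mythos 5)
Your proof is correct and follows essentially the same route as the paper: compute $v_\infty(c_4)$, $v_\infty(c_6)$, $v_\infty(\Delta)$ and run Tate's algorithm, with the squarefreeness of $4A^3+27B^2$ handling the finite fibers and \cite[Lemma IV.1.1]{MiES} (equivalently, minimality of the Weierstrass model plus $\chi(\cO_X)=k$) giving $p_g=k-1$. Your extra care is in fact a small improvement: the paper asserts $v(c_4)=\alpha$ and $v(c_6)=\beta$ outright, whereas for $(A,B)\in U_{m,n}$ only one of these equalities is guaranteed (the other being merely $\geq$), and your observation that $v(\Delta)=\min(3\alpha,2\beta)$ is exact — together with the relation $1728\Delta=c_4^3-c_6^2$ — is precisely what pins down the Kodaira type in the remaining cases.
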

\begin{proof}
The statement about $p_g(X)$ is \cite[Lemma IV.1.1]{MiES}.
For the fiber over infinity we have $v(c_4)=\alpha$, $v(c_6)=\beta$ and $v(\Delta)=12k-\max(3m,2n)=\min(3\alpha,2\beta)$.
A straight-forward application of Tate's algorithm yields the type of fiber and the type of singularity, see \cite{Tate}.
\end{proof}

For a given $(A,B)\in U_{m,n}$ we constructed a Weierstrass model $W_{A,B}$. This yields a universal family over $\cU_{m,n}$ i.e., a morphism of schemes $\cW_{n,m}\to \cU_{n,m}$.

\begin{proposition}\label{prpConstant} Consider the universal Weierstrass model $\psi:\cW_{m,n}\to \cU_{m,n}$. Then the resolution of the singularity of the generic fiber yields a  family of elliptic surfaces with a section $\cX_{m,n} \to U_{m,n}$ such that for all $b\in |\cU_{m,n}|$ we have $T_\eta=T_b$.
\end{proposition}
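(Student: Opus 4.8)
The plan is to perform Miranda's construction of the relative minimal model of a Weierstrass fibration (cf.\ \cite[Lecture III.3]{MiES}) relatively over $\cU_{m,n}$, and then to read off the trivial lattice from the preceding lemma and Lemma~\ref{lemSpeInj}.

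First I would record the geometry of the universal Weierstrass model $\psi\colon\cW_{m,n}\to\cU_{m,n}$. Since $\cW_{m,n}$ is a hypersurface in the $\Ps^2$-bundle $\Ps(\cO\oplus\cO(-2k)\oplus\cO(-3k))$ over $\Ps^1\times\cU_{m,n}$, it is a local complete intersection, hence Cohen--Macaulay, and as its fibers over $\cU_{m,n}$ are the integral surfaces $W_{A,B}$ of dimension $2$, it is flat over the regular scheme $\cU_{m,n}$. By the preceding lemma, for each $b=(A,B)\in|\cU_{m,n}|$ the surface $W_{A,B}$ is smooth except possibly at the single point $P_\infty=([0\colon1],[0\colon0\colon1])$ lying over $t=\infty$ — over a simple root of $4A^3+27B^2$ the total space of a Weierstrass equation is smooth because the discriminant vanishes to order one, and over $t=\infty$ the fiber of $\cW_{m,n}$ is a plane cubic, whose only singular point, if any, is the cusp $[0\colon0\colon1]$ — and at $P_\infty$ it carries a rational double point of type $\tau=\tau(m,n)\in\{A_0,A_1,A_2,D_4,E_6,E_7,E_8\}$, with $A_0$ meaning a smooth point, depending only on $(\alpha,\beta)$. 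By the fiberwise criterion for smoothness, $\cW_{m,n}$ is therefore smooth away from the section $\Sigma=\{s=0,\ X=Y=0\}\cong\cU_{m,n}$, along which it has a family of rational double points of constant type $\tau$.

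Next I would resolve. The Weierstrass equation is minimal at every place of $\Ps^1$: at a finite place since $4A^3+27B^2$ is separable, so its valuation there is at most one; and at $t=\infty$ since for $(A,B)\in U_{m,n}$ one cannot have $\deg A\le 4k-4$ and $\deg B\le 6k-6$ simultaneously, for that would force $\deg(4A^3+27B^2)\le 12k-12<\max(3m,2n)$. Hence Tate's algorithm resolves $\cW_{m,n}$ along $\Sigma$ through a finite sequence of blow-ups whose centers are cut out by explicit polynomial equations in the Weierstrass coefficients and a local coordinate at $t=\infty$, and whose combinatorics depend only on $(v(c_4),v(c_6),v(\Delta))$ at $\infty$, hence not on $b$; and no $(-1)$-curve has to be contracted afterwards. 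Carrying these blow-ups out over all of $\cU_{m,n}$ produces a projective $\cU_{m,n}$-scheme $\cX_{m,n}$ with a morphism $\pi\colon\cX_{m,n}\to\cC:=\Ps^1\times\cU_{m,n}$ and a zero section $\sigma_0=\{X=Z=0\}$, the latter disjoint from $\Sigma$ and so lifting isomorphically. The step I expect to be the main obstacle is to check that this commutes with passing to fibers, i.e.\ that $(\cX_{m,n})_b$ is the minimal resolution of $W_{A,B}$ for every closed $b$; I would prove this by induction on the number of blow-ups, using that each successive center $Z\subset\cW_{m,n}$ is flat over $\cU_{m,n}$ with $Z\cap\cW_b$ the corresponding center of the fiberwise algorithm, so that blowing it up commutes with the regular closed immersion $\cW_b\hookrightarrow\cW_{m,n}$. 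Granting this, $\cX_{m,n}$ has smooth surface fibers over the smooth base $\cU_{m,n}$, hence is itself smooth, and $\cX_{m,n}\to\cU_{m,n}$ meets Definition~\ref{defFamily}: $\cU_{m,n}$ is a smooth geometrically irreducible $K$-variety (an open subscheme of $\As^{m+n+2}_K$), $\cC\to\cU_{m,n}$ is a smooth curve, and for each $b$ the surface $(\cX_{m,n})_b\to\cC_b$ is a geometrically irreducible smooth projective elliptic surface over $K(b)$ with zero section $(\sigma_0)_b$ — no fiber contains a $(-1)$-curve precisely because the Weierstrass equation is minimal.

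Finally I would identify the trivial lattice. By the preceding lemma the configuration of singular fibers of $(\cX_{m,n})_b\to\cC_b$ consists of $\max(3m,2n)$ fibers of type $I_1$, coming from the simple roots of the separable polynomial $4A^3+27B^2$ and contributing nothing to the configuration since they are irreducible, together with the single fiber over $t=\infty$, whose Kodaira type depends only on $(\alpha,\beta)$. Thus the configuration of singular fibers is the same for all $b\in|\cU_{m,n}|$, in particular equal to that of $\cX_\eta$, so $\cU_{m,n}$ is contained in the open locus $\cB'$ of Lemma~\ref{lemSpeInj}, and that lemma yields $\spa(T_\eta)=T_b$ for every closed $b$. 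Since $\spa$ is injective, this identifies $T_\eta$ with $T_b$, which is the assertion.
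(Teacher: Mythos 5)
Your proof is correct and follows essentially the same strategy as the paper: show the Weierstrass models over $\cU_{m,n}$ have at most one singular point, of constant rational double point type at $t=\infty$, resolve it simultaneously over the base, and conclude that the constant configuration of singular fibers forces $\spa(T_\eta)=T_b$. The only difference is presentational — you organize the simultaneous resolution via flatness of the blow-up centers and base-change compatibility, where the paper carries out the blow-ups explicitly case by case ($A_1$, $A_2$, $D_4$, and "similarly" for $E_6,E_7,E_8$) — and you are somewhat more careful about minimality of the Weierstrass equation and the absence of $(-1)$-curves.
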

\begin{proof}
From the previous lemma it follows that over $\cU_{m,n}$ all Weierstrass models are smooth (if $\alpha=0$ or if $\beta\in \{0,1\}$) or all Weierstrass models have the same type of singularity (if $\alpha>0$ and $\beta>1$).
 In particular, if we manage to resolve the singularities  of the morphism $\cW_{m,n}\to U_{m,n}$ uniformly then we have that the trivial lattice is constant, i.e., $\spa(T_\eta)=T_b$ for all $b\in |\cU_{m,n}|$.

Suppose that the singularity is of type $A_1$ or type $A_2$ then a single blow-up of the point $((X:Y:Z),(s:t))=((0:0:1),(0:1))$  suffices to resolve the singularity, hence this can be done simultaneously for the whole family.
In case the singularity is of type $D_4$ then we have a local equation of type
\[ -y^2+x^3+s^2g(s)x+s^3h(s)=0\]
where $4g(s)^3+27h(s)^2$ does not vanish at $s=0$. If substitute $y=sy, x=sx$ then the strict transform has equation
\[ -y^2+s(x^3+g(s)x+h(s))=0.\]
Since $4g(0)+27h(0)\neq0$ holds, this surface has  $A_1$ singularities at the three points in $y=s=x^3+g(0)x+h(0)=0$. 
 Hence this singularity can be also resolved simultaneously. 
 
 One can proceed similarly for the remaining singularities $E_6,E_7,E_8$. In particular,  if we resolve the singularity at $((0:0:1),(0:1))$ then we obtain a family  $\cX_{m,n}\to \cU_{m,n}$ of elliptic surface over $\Ps^1$ .
\end{proof}

\begin{theorem}  Let $K$ be a finitely generated field over $\Q$. 
Let $m,n$ be positive integers.  Let $\cX_{m,n}$ be the universal elliptic curve over $\cB=\cU_{m,n}$. Then
$E_\eta(K(\eta)(t))$ is finite.
\end{theorem}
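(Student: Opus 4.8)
The plan is to deduce this from Proposition~\ref{prpRankVan}. That proposition reduces the finiteness of $E_\eta(K(\eta)(t))$ to two things: that $\cX_{m,n}\to\cU_{m,n}$ is a family of non-trivial elliptic surfaces with a section, and the existence of a single rational point $b_0\in\cU_{m,n}(K)$ such that $\tw(b_0)$ is infinite and the configuration of singular fibres of $\cX_{b_0}\to\cC_{b_0}$ agrees with that of $\cX_\eta\to\cC_\eta$. First I would dispatch the structural points. By Proposition~\ref{prpConstant} we have $T_b=T_\eta$ for every $b\in|\cU_{m,n}|$, and since the configuration of singular fibres is read off from the trivial lattice it is the same at every closed point as at $\eta$; in particular the configuration hypothesis of Proposition~\ref{prpRankVan} will be automatic for whatever $b_0$ we choose. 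Non-triviality is clear because for a generic coefficient choice the $j$-invariant $1728\cdot 4A^3/(4A^3+27B^2)$ is non-constant in $t$, so $\cX_\eta$ is non-isotrivial, hence non-trivial, and then every member of the family is non-trivial as well.

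It remains to exhibit $b_0$ with $\tw(b_0)$ infinite, and here I would use quadratic twists directly. Take $b_0=(A_0,B_0)$ equal to $(1,t^n)$ when $2n\geq 3m$ and to $(t^m,1)$ otherwise; this lies in $U_{m,n}(K)$, and since $m,n\geq 1$ neither $A_0$ nor $B_0$ is identically zero, so the $j$-invariant of $E_{b_0}$ is not identically $0$ or $1728$ and $\Aut(E_{b_0})=\{\pm 1\}$ over $\overline{K(t)}$, whence the quadratic twists of $E_{b_0}$ over $K(t)$ are classified by $K(t)^*/(K(t)^*)^2$. For $d\in K^*$ put $b_d:=(d^2A_0,d^3B_0)$. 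This is again in $U_{m,n}(K)$, because the discriminant just gets multiplied by $d^6$ so its degree and smoothness are unchanged, and the Weierstrass equation of $E_{b_d}$ is exactly the quadratic twist by $d$ of that of $E_{b_0}$. Rescaling $X$ and $Y$ by powers of $\sqrt d$ produces an isomorphism $E_{b_0}\cong E_{b_d}$ over $K(\sqrt d)$, and passing to the associated minimal elliptic surfaces gives $(\cX_{b_0}/\cC_{b_0})_{K(\sqrt d)}\cong(\cX_{b_d}/\cC_{b_d})_{K(\sqrt d)}$; while for $d\notin(K^*)^2$ the surfaces $\cX_{b_0}/\cC_{b_0}$ and $\cX_{b_d}/\cC_{b_d}$ are not isomorphic over $K$, since any isomorphism over $\cC_{b_0}=\Ps^1_K$ would restrict to an isomorphism of generic fibres over $K(t)$ and, since $j\neq 0,1728$, would force $d$ to be a square in $K(t)$, hence in $K$. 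Thus the class of $d$ lies in $\tw(b_0)$ for every $d\in K^*$, and as $K^*/(K^*)^2$ is infinite for a field finitely generated over $\Q$ (for instance because the kernel of $\Q^*/(\Q^*)^2\to K^*/(K^*)^2$ is finite while the source is infinite), $\tw(b_0)$ is infinite. Proposition~\ref{prpRankVan} applied to $b_0$ then gives that $E_\eta(K(\eta)(t))$ is finite.

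I would treat as routine the checks that $b_0$ and all the $b_d$ lie in $U_{m,n}(K)$, that rescaling by $\sqrt d$ trivialises the twist, and that $K^*/(K^*)^2$ is infinite. The one step that needs real care, and which I expect to be the main obstacle, is the comparison of \emph{elliptic surfaces} as opposed to their \emph{generic fibres}: an isomorphism $\cX_{b_0}\cong\cX_{b_d}$ over $\Ps^1_K$ need not carry the zero section to the zero section, but it still induces an isomorphism of the underlying genus-one curves over $K(t)$, and composing with a translation one obtains an isomorphism of elliptic curves, after which the usual classification of quadratic twists of a curve with $j\neq 0,1728$ applies and rules out non-square $d$. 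This is exactly where the hypothesis $m,n\geq 1$ is used: it guarantees $A_0,B_0\not\equiv 0$, hence $j\not\equiv 0,1728$, which is precisely what makes the twist classification available.
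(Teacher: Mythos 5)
Your proposal is correct and follows essentially the same route as the paper: pick a point of $U_{m,n}(K)$, observe that $(A,B)\mapsto(d^2A,d^3B)$ realises every quadratic twist inside $U_{m,n}$ because the discriminant only rescales by $d^6$, conclude $\tw(b_0)=K^*/(K^*)^2$ is infinite, and invoke Proposition~\ref{prpRankVan}. You are in fact somewhat more careful than the paper, which takes an arbitrary $(A,B)\in U_{m,n}(K)$ and does not spell out the non-triviality of the family, the constancy of the fibre configuration, or the non-isomorphism over $K$ of a surface and its non-trivial twist.
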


\begin{proof}
Fix a pair $(A,B)\in U_{m,n}(K)$. 
Take now a $d\in K^*$. Then the Weierstrass equation of $E^{(d)}_{A,B}$ equals
\[ -YZ^2+X^3+d^2AXZ^2+d^3Z^3\]
In particular, we have that 
$ E^{(d)}_{A,B}\cong E_{d^2A,d^3B}$
as elliptic curves over $K(t)$.

Recall that from $(A,B)\in U_{m,n}(K)$ it follows  that $\Delta(A,B):=4A^3+27B^2$ has degree $\max(3m,2n)$ and is smooth.
Since $\Delta(d^2A,d^3B)=d^6\Delta(A,B)$ holds, also $\Delta(d^2A, d^43B)$ is smooth and of degree $\max(3m,2n)$. Hence $(d^2A,d^3B)\in U_{m,n}(K)$ and therefore $\tw((A,B))=K^*/(K^*)^2$. From Proposition~\ref{prpRankVan} it follows now that $E_\eta(K(\eta)(t))$ is finite.
\end{proof}

\begin{remark} In the following we will use results from \cite{Ser} concerning the density of thin subsets. In these statements the measure on $K[t]_n\times K[t]_n$ is the naive height. Cowan \cite{Cowan}  uses the Mahler measure. Since these measures are equivalent we are free to use these result from \cite{Ser}.
\end{remark}

\begin{corollary}\label{corDens} Suppose $K$ is a number field. Then the set $(A,B)\in S_{m,n}(K)$ such that $E_{A,B}(K(t))$ has rank zero is the complement of a thin set. In particular, this set has density one in $S_{m,n}(K)$.
\end{corollary}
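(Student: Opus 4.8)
The plan is to feed the family $\cX_{m,n}\to\cU_{m,n}$ of Proposition~\ref{prpConstant} into the arithmetic specialisation statement Theorem~\ref{mainThmB}. For this family the base curve is $\Ps^1\times\cU_{m,n}$, so for a $K$-rational point $b=(A,B)$ one has $K(b)=K$ and $\cC_b=\Ps^1_K$, whence $E_b(K(b)(\cC_b))=E_{A,B}(K(t))$. By the preceding theorem the generic group $E_\eta(K(\eta)(t))$ is finite, and by Theorem~\ref{mainThmB} there is a sparse subset $Z\subset|\cU_{m,n}|$ such that the specialisation map $E_\eta(K(\eta)(t))\to E_{A,B}(K(t))$ is an isomorphism for every $(A,B)\in U_{m,n}(K)\setminus Z$. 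Hence every such $(A,B)$ gives an elliptic surface of Mordell--Weil rank zero, and the set of $(A,B)\in S_{m,n}(K)$ of positive rank is contained in $\bigl(S_{m,n}(K)\setminus U_{m,n}(K)\bigr)\cup\bigl(Z\cap U_{m,n}(K)\bigr)$.

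Next I would argue that this last union is thin. The first piece is contained in $Z''(K)$, the $K$-points of a proper Zariski-closed subset of $\As^{m+n+2}$ — proper because the preceding lemma exhibits a point of $U_{m,n}$ — hence it is thin of type I. For the second piece, $Z$ is sparse in $|\cU_{m,n}|$; composing the generically finite dominant morphism witnessing this with the open immersion $\cU_{m,n}\hookrightarrow\As^{m+n+2}$ shows that $Z$ is sparse in $|\As^{m+n+2}|$, so by the remark following the definition of sparse subset, $Z\cap\As^{m+n+2}(K)=Z\cap U_{m,n}(K)$ is thin. A finite union of thin sets is thin, so the positive-rank locus sits inside a thin set, and the rank-zero locus is its complement, as claimed.

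The density statement then follows formally from Serre's results in \cite{Ser}: a thin subset of $\As^{m+n+2}(K)$ has density zero when the points are counted by naive height, and since $S_{m,n}(K)$ differs from $\As^{m+n+2}(K)$ only by the thin set $Z'(K)$, while the naive height and the Mahler measure are equivalent (see the remark above), the rank-zero locus has density one in $S_{m,n}(K)$.

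This corollary is essentially an assembly of the earlier results, so I do not expect a genuine obstacle; the only point demanding a little care is the passage from ``sparse'' to ``thin'' — verifying that a sparse subset of the open variety $\cU_{m,n}$ remains sparse inside the ambient affine space (so that the remark, and hence Serre's density theorem, apply verbatim) and that $S_{m,n}\setminus U_{m,n}$ is cut out by the vanishing of a single nonzero coefficient and is therefore a proper closed subset.
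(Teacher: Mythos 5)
Your argument is correct and follows essentially the same route as the paper: combine Proposition~\ref{prpConstant} and Theorem~\ref{mainThmB} with the finiteness of the generic Mordell--Weil group to get rank zero off a thin set, then invoke Serre's density theorem. Your extra care in passing from a sparse subset of $|\cU_{m,n}|$ to a thin subset of $\As^{m+n+2}(K)$, and in noting that $S_{m,n}\setminus U_{m,n}$ is a proper closed subset, only makes explicit what the paper's proof leaves implicit.
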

\begin{proof}
Combining Proposition~\ref{prpConstant} with Theorem~\ref{mainThmB} yields that there is a thin subset $Z$ of $U_{m,n}(K)$ such that for all $(A,B)\in U_{m,n}(K)\setminus Z$ we have that the rank of $E_{A,B}(K(t))$ vanishes. Now the complement of $U_{m,n}(K)$ in $K[t]_m\times K[t]_n\cong \As^{m+n+2}$ is a Zariski closed proper subset.  From \cite[Theorem 13.1.3]{Ser} it follows that $U_{m,n}(K)\setminus Z$ has density one  in $K[t]_m \times K[t]_n$, and therefore also density one in the smaller set $S_{m,n}(K)$.
\end{proof}

To prove Cowan's conjecture we have to deal with some subtle points. First of all, Cowan's conjecture is formulated over $\Z$ rather than over $\Q$.
 Moreover, there is a density zero subset of Weierstrass equations whose minimal model is trivial.
Let $s=\lfloor \min(m/4,n/6)\rfloor$.
For an integral domain $R$, let 
\[Z^{tr}_{m,n}(R)=\left\{ (\lambda u^4,\mu u^6)\mid \lambda,\mu\in R, 4\lambda^3+27\mu^2\neq0, u\in R[t]_s\setminus \{0\}\right\}.\]

\begin{proposition} \label{rnkBnd} Let $K$ be a field. Let $m,n$ be positive integers and $k=\lceil \max(m/4,n/6) \rceil$. Let $(A,B)\in S_{m,n}(K)\setminus Z^{tr}_{m,n}(K)$. Then 
\[ \rank E_{A,B}(\oK(t))\leq 10k-2.\]
\end{proposition}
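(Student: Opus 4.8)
The plan is to bound the Picard number of the elliptic surface $X_{A,B}$ and then apply the Shioda--Tate formula (Proposition~\ref{prpST}) together with the explicit description of the trivial lattice. Write $\chi=\chi(\cO_{X_{A,B}})=p_g(X_{A,B})+1=k$ by the lemma computing $p_g$. Recall the standard inequality $\rho(S)\le b_2(S)$ for a smooth projective surface $S$ over an algebraically closed field, and for an elliptic surface with section over $\Ps^1$ with $\chi=k$ one has $b_2(X_{A,B})=12\chi-2=12k-2$. Combining with the Shioda--Tate isomorphism $E_{A,B}(\oK(t))\cong \NS(X_{A,B})/T$ and $\rank T = 2 + \sum_p(\text{number of components of }\pi^{-1}(p)\text{ not meeting }\sigma_0)$, we get
\[ \rank E_{A,B}(\oK(t)) = \rho(X_{A,B}) - \rank T \le (12k-2) - \rank T. \]
So the task reduces to producing a lower bound $\rank T \ge 2k$, which together with the above yields the claimed $\le 10k-2$.

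The lower bound on $\rank T$ comes from the fiber at $t=\infty$, which is the only fiber that can contribute more than the two generators $F,Z$ to $T$, since by the previous lemma all other singular fibers are of type $I_1$ (contributing nothing beyond $\Lambda'$). Here I would invoke the hypothesis $(A,B)\notin Z^{tr}_{m,n}(K)$: this guarantees we are not in the trivial case and — crucially — that the Weierstrass equation genuinely achieves the degree bounds forcing $k=\lceil\max(m/4,n/6)\rceil$ rather than a smaller value after a coordinate change $(A,B)\mapsto(u^{-4}A,u^{-6}B)$ with $u\in K[t]_s$. Then I claim $\alpha=4k-m$ and $\beta=6k-n$ cannot both be small: by definition of $k$ we have $\alpha<4$ or $\beta<6$, but at least one of $m\le 4k$, $n\le 6k$ is an equality-type constraint and the non-triviality rules out the fiber over $\infty$ being smooth of type $I_0$ "for free" — more precisely one checks $\min(3\alpha,2\beta)=v(\Delta_\infty)>0$ once we exclude $Z^{tr}_{m,n}$, so the fiber at $\infty$ is one of $II,III,IV,I_0^*,IV^*,III^*,II^*$. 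The lattices $\Lambda_\infty$ attached to these are $0,A_1,A_2,D_4,E_6,E_7,E_8$ of ranks $0,1,2,4,6,7,8$. This alone is not obviously $\ge 2k-?$, so the real content is a bookkeeping argument relating the rank of $\Lambda_\infty$ to how far $(m,n)$ is from $(4k,6k)$.

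The cleaner route, which I would actually carry out: use the Euler-number identity $e(X_{A,B})=12k = \sum_p e(\pi^{-1}(p)) = \#\{I_1\text{-fibers}\} + e_\infty$ where $e_\infty$ is the Euler number of the fiber at infinity, together with $\#\{I_1\text{-fibers}\}=\deg\Delta - v_\infty(\Delta)=\max(3m,2n)-\min(3\alpha,2\beta)$. The number of non-identity components of the $\infty$-fiber is $e_\infty-1$ if that fiber is additive (type $\ne I_0,I_1$) and $m_v - 1$ in general, and one reads off from Kodaira's table that for the additive types this component count equals $\rank\Lambda_\infty$. So
\[ \rank T = 2 + \rank\Lambda_\infty. \]
Hence $\rank E_{A,B}(\oK(t)) \le 12k-2 - 2 - \rank\Lambda_\infty = 12k-4-\rank\Lambda_\infty$, and it remains to show $\rank\Lambda_\infty \ge 2k-2$ would be too strong; instead the honest bound needed is only $\rank\Lambda_\infty\ge 0$, giving $12k-4$, which is weaker than $10k-2$ precisely when $k\ge 1$. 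So a pure Euler-characteristic count is insufficient and one genuinely needs that $X_{A,B}$ is not "too general": the point is that $p_g = k-1$ forces, via the structure of the Weierstrass data of degrees $(m,n)$ with $\max(3m,2n)$ the true degree of $\Delta$, enough forced reducible fibers — I expect the intended argument instead bounds $\rho$ more sharply using that $X_{A,B}$ is not a K3-type "maximal" surface, or uses the bound $\rank\MW \le \rank\NS - 2 \le 10\chi - 2$ coming from the refined inequality $\rho \le 10\chi$ which holds because $h^{2,0}=h^{0,2}=p_g=k-1\ge 0$ contributes transcendentally, i.e. $\rho \le b_2 - 2p_g = (12k-2)-2(k-1) = 10k$. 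That last inequality is the key step, and it is the standard Hodge-theoretic bound $\rho(S)\le h^{1,1}(S)$.

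**The main obstacle.** The crux is justifying $\rho(X_{A,B})\le h^{1,1}(X_{A,B}) = b_2 - 2p_g = 10k$, which over $\C$ is the Lefschetz $(1,1)$-theorem but needs care in characteristic $0$ over a general field $K$ (reduce to $\C$ via a choice of embedding, or to $\oQ$ and compare with an $\ell$-adic or crystalline bound). Granting that, one concludes
\[ \rank E_{A,B}(\oK(t)) = \rho(X_{A,B}) - \rank T \le 10k - \rank T \le 10k - 2, \]
using only $\rank T\ge 2$ from the classes $F$ and $Z$ in $\Lambda'$; the hypothesis $(A,B)\notin Z^{tr}_{m,n}(K)$ is what ensures $X_{A,B}$ is non-trivial so that Proposition~\ref{prpST} applies and $\chi(\cO_{X_{A,B}})=k$ is correct rather than being computed for a lower value of $k$ after minimalization.
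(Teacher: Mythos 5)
Your final paragraph is essentially the paper's proof: exclude $Z^{tr}_{m,n}$ to rule out triviality so that Shioda--Tate applies, then bound $\rank E_{A,B}(\oK(t))\le \rho-2\le h^{1,1}-2=10\chi(\cO_X)-2\le 10k-2$; the Euler-number detour in the middle is a dead end you correctly abandon. One small correction: the lemma computing $p_g=k-1$ only applies to $(A,B)\in U_{m,n}$, so for general $(A,B)\in S_{m,n}\setminus Z^{tr}_{m,n}$ you cannot assert $\chi=k$ (and excluding $Z^{tr}_{m,n}$ does not force it, since the Weierstrass model may be non-minimal); the paper instead uses semicontinuity of $p_g$ to get $p_g\le k-1$, which is all the inequality requires.
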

\begin{proof}
For $(A',B')\in U_{m,n}$ we have $p_g(X)=k-1$. Using semi-continuity we find that $p_g(X_{A,B})\leq k-1$.

If $X_{A,B}$ is trivial then a minimal Weierstrass equation of the generic fiber $E_{A,B}/K(t)$ is 
\[ y^2=x^3+\lambda x +\mu.\]
with $\lambda,\mu \in K$.
 Every other short Weierstass equation for $E$, in particular the equation for $W_{A,B}$, is of the form
 \[ y^2=x^3+\lambda u^4 x +\mu u^6.\]
 with $u\in K(t)$. In particular $(A,B)\in Z^{tr}_{m,n}$, which we excluded. 
 
Hence we may apply the Shioda-Tate formula (Proposition~\ref{prpST}) and obtain  $\rank E(\oK(t))\leq h^{1,1}-2$. From \cite[Lemma IV.1.1]{MiES} it follows that $h^{1,1}=10(p_g+1)\leq 10k$. 
\end{proof}

\begin{corollary}Let $m,n$ be positive integers. Then the average rank of $S_{m,n}(\Z)$ is zero.
\end{corollary}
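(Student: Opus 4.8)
The plan is to combine Corollary~\ref{corDens} --- which gives Mordell--Weil rank zero outside a thin set --- with the rank bound of Proposition~\ref{rnkBnd}, which is uniform (equal to $10k-2$) off the trivial locus $Z^{tr}_{m,n}$. It suffices to prove that
\[ \frac{1}{\#S_{m,n}(M)}\sum_{E_{A,B}\in S_{m,n}(M)}\rank E_{A,B}(\Q(t))\ \longrightarrow\ 0\qquad(M\to\infty),\]
where $S_{m,n}(M)$ is Cowan's set, identified with $\{(A,B)\in P_m(M^2)\times P_n(M^3)\mid 4A^3+27B^2\neq0\}$. First I would apply Corollary~\ref{corDens} with $K=\Q$ to obtain a thin subset $Z\subset\As^{m+n+2}(\Q)$ with $\rank E_{A,B}(\Q(t))=0$ for all $(A,B)\in S_{m,n}(\Q)\setminus Z$, and note that $Z^{tr}_{m,n}(\Q)\subset Z$: the discriminant of $(\lambda u^4,\mu u^6)$ is $(4\lambda^3+27\mu^2)u^{12}$, not squarefree once $\deg u\geq1$ and of degree $0<\max(3m,2n)$ when $u$ is a nonzero constant, so $(\lambda u^4,\mu u^6)\notin U_{m,n}(\Q)$; and the complement of $U_{m,n}(\Q)$ lies in the thin set of Corollary~\ref{corDens}, as its proof shows. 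With $k=\lceil\max(m/4,n/6)\rceil$, I would partition $S_{m,n}(M)=G_M\sqcup B_M\sqcup T_M$, where $G_M=S_{m,n}(M)\setminus Z$, $T_M=Z^{tr}_{m,n}(\Z)\cap S_{m,n}(M)$ and $B_M=(Z\setminus Z^{tr}_{m,n})\cap S_{m,n}(M)$. Since $\rank E_{A,B}(\Q(t))=0$ on $G_M$, and since Proposition~\ref{rnkBnd} together with $\rank E_{A,B}(\Q(t))\leq\rank E_{A,B}(\oQ(t))$ gives $\rank E_{A,B}(\Q(t))\leq10k-2$ on $B_M$, one gets
\[ \sum_{E_{A,B}\in S_{m,n}(M)}\rank E_{A,B}(\Q(t))\ \leq\ (10k-2)\,\#B_M\ +\sum_{(A,B)\in T_M}\rank E_{A,B}(\Q(t)).\]

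\textbf{The thin part.} The set $Z\setminus Z^{tr}_{m,n}$ is thin: a finite union of thin subsets of type I and type II (from the non-smooth-discriminant locus and the thin set produced in Theorem~\ref{mainThmB}). Applying the density estimate for thin sets in \cite{Ser}, in the form appropriate to Cowan's ordering --- using that the Mahler measure and the naive height are equivalent --- yields $\#B_M=o(\#S_{m,n}(M))$, so $(10k-2)\#B_M=o(\#S_{m,n}(M))$ since $k$ is fixed.

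\textbf{The trivial locus.} For $(A,B)=(\lambda u^4,\mu u^6)\in Z^{tr}_{m,n}(\Z)$ the substitution $(x,y)\mapsto(u^2x,u^3y)$ identifies $E_{A,B}$ over $\Q(t)$ with the base change of the elliptic curve $E:y^2=x^3+\lambda x+\mu$ over $\Q$; since a curve of genus one admits no nonconstant morphism from $\Ps^1$, this gives $E_{A,B}(\Q(t))=E(\Q)$. On $S_{m,n}(M)$ one has $|\lambda|<M^2$ and $|\mu|<M^3$ (the Mahler measure of a nonzero integer polynomial being $\geq1$), hence $|\mathrm{disc}(E)|\ll M^6$, and the standard unconditional descent bound $\rank E(\Q)\leq\dim_{\mathbf{F}_2}E(\Q)/2E(\Q)\ll\log|\mathrm{disc}(E)|$ gives $\rank E(\Q)\ll\log M$. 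On the other hand $Z^{tr}_{m,n}$ is contained in a proper closed subvariety of $\As^{m+n+2}$ --- the image of $(\lambda,\mu,u)\mapsto(\lambda u^4,\mu u^6)$, whose source has dimension $\lfloor\min(m/4,n/6)\rfloor+3<m+n+2$ --- so, exactly as for the type I part above, $\#T_M\ll M^{-\delta}\#S_{m,n}(M)$ for some $\delta>0$. Thus $\sum_{(A,B)\in T_M}\rank E_{A,B}(\Q(t))\ll(\log M)\,\#T_M=o(\#S_{m,n}(M))$. Combining the three estimates gives $\sum_{E_{A,B}\in S_{m,n}(M)}\rank E_{A,B}(\Q(t))=o(\#S_{m,n}(M))$, which is the assertion.

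\textbf{The main difficulty.} I expect the delicate point to be the density estimate for $B_M$. Cowan's ordering weights the coefficients of $A$ and of $B$ by $M^2$ and $M^3$, so what is needed is the number of integer points of a thin set in an \emph{anisotropic} box, i.e.\ a suitably weighted Hilbert irreducibility / large-sieve bound rather than the isotropic one. For the thin subsets of type~I this is elementary (fix all but one of the coordinates that genuinely occur), but for the type~II pieces coming from Theorem~\ref{mainThmB} one must verify that the weighted form of the estimates of \cite{Ser} is available, or deduce it from the anisotropic large sieve.
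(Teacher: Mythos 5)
Your decomposition of $S_{m,n}(M)$ into the generic part, the nontrivial thin part, and the trivial locus is exactly the paper's, and the treatment of the nontrivial thin part (the uniform bound $10k-2$ from Proposition~\ref{rnkBnd} times a density-zero count) coincides with the paper's. Where you genuinely diverge is on the trivial locus $Z^{tr}_{m,n}(\Z)$: the paper states that it is not known whether the rank is bounded there, and instead invokes Bhargava--Shankar (via \cite{BBD}) to say that the average rank on $Z^{tr}_{m,n}(\Z)$ exists and is finite, so a density-zero set with finite average contributes nothing. You avoid that deep input entirely by observing that for $(A,B)=(\lambda u^4,\mu u^6)$ the curve $E_{A,B}$ is a constant curve over $\Q(t)$, so $E_{A,B}(\Q(t))=E(\Q)$, that the Mahler-measure bounds force $|\lambda|<M^2$, $|\mu|<M^3$ (using $\mu(u)\ge 1$ for a nonzero integer polynomial), and then using the classical $2$-descent bound $\rank E(\Q)\ll\log|\mathrm{disc}(E)|\ll\log M$ against a power-saving count for integer points on the proper subvariety containing $Z^{tr}_{m,n}$. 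This is correct and more self-contained than the paper's route; note also that your verification that $Z^{tr}_{m,n}\subset Z$ (so that trivial curves do not land in the ``rank zero'' part) is a point the paper leaves implicit. Finally, your flagged ``main difficulty'' --- that Cowan's ordering puts the coefficients of $A$ and $B$ in boxes of different sizes $M^2$ and $M^3$, so one needs an anisotropic version of Serre's thin-set estimates --- is a legitimate issue which the paper dispatches only with the remark that the Mahler measure and the naive height are equivalent; that remark addresses Mahler-versus-naive height but not the anisotropy, so your caution applies equally to the paper's own argument and does not constitute a gap specific to your proof.
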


\begin{proof}
From Corollary~\ref{corDens} it follows that there  a thin subset $Z\subset S_{m,n}(\Z)$ such that on $S_{m,n}(\Z)\setminus Z$ the rank is zero. 
Let $Z'=Z\cap Z^{tr}_{m,n}(\Z)$.
Let $Z''=Z\setminus Z'$. By \cite[Theorem 13.1.1]{Ser} both sets $Z'$ and $Z''$ have density zero in $S_{m,n}(\Z)$. 

On $Z''(\Z)$  the Mordell-Weil rank can be bounded by $10k-2$ by Proposition~\ref{rnkBnd}. Since this set has density zero it does not contribute to the average rank.

It is not known whether the rank on $Z'(\Z)$ is bounded or not. However, from \cite{BhaSha} it follows that on $Z^{tr}_{m,n}(\Z)$ the average rank  exists and is finite (cf. \cite[Section 3]{BBD}). Since this set has density zero, it also does not contribute to the average rank.
\end{proof}

We will now generalize Cowan's conjecture to an arbitrary number field $K$. However we cannot control the average rank on $Z^{tr}_{m,n}(\cO_K)$.

\begin{corollary}\label{corCowGen} Let $m,n$ be positive integers. Let $K$ be a number field with ring of integers $\cO_K$. Then the average rank of $S_{m,n}(\cO_K)\setminus Z^{tr}_{m,n}(\cO_K)$ is zero.
\end{corollary}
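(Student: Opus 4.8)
The plan is to reduce Corollary~\ref{corCowGen} to Corollary~\ref{corDens} and Proposition~\ref{rnkBnd}, following the same pattern as the proof of the previous corollary (over $\Z$), but being careful that the only source of possibly-unbounded rank, namely the ``trivial'' locus $Z^{tr}_{m,n}$, has been removed from the set under consideration, so that we never need the input from \cite{BhaSha}. The ambient set here is $S_{m,n}(\cO_K)\setminus Z^{tr}_{m,n}(\cO_K)$, and we must show the average of $\rank E_{A,B}(K(t))$ over this set is zero.

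First I would invoke Corollary~\ref{corDens}, which produces a thin subset $Z\subset S_{m,n}(K)$ such that for $(A,B)\in S_{m,n}(K)\setminus Z$ the rank of $E_{A,B}(K(t))$ is zero. Intersecting with $\cO_K$ and removing the trivial locus, set $W:=\left(S_{m,n}(\cO_K)\setminus Z^{tr}_{m,n}(\cO_K)\right)\cap Z$; this is the exceptional set where the rank might be positive. By \cite[Theorem 13.1.1]{Ser} the thin set $Z$ has density zero in $S_{m,n}(\cO_K)$, hence $W$ has density zero in $S_{m,n}(\cO_K)\setminus Z^{tr}_{m,n}(\cO_K)$ as well. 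On the complementary set $\left(S_{m,n}(\cO_K)\setminus Z^{tr}_{m,n}(\cO_K)\right)\setminus W$ the rank is identically zero, so it contributes nothing to the average.

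It then remains to check that $W$ does not contribute to the average either. Here is the key point that makes the statement true only after excising $Z^{tr}_{m,n}$: every $(A,B)\in W$ satisfies $(A,B)\notin Z^{tr}_{m,n}(\cO_K)$, so Proposition~\ref{rnkBnd} applies and gives the uniform bound $\rank E_{A,B}(K(t))\leq \rank E_{A,B}(\oK(t))\leq 10k-2$, where $k=\lceil\max(m/4,n/6)\rceil$ is a constant depending only on $(m,n)$. A set of density zero on which the quantity being averaged is uniformly bounded contributes zero to the average; combining this with the vanishing of the rank on the complement of $W$ yields that the average rank of $S_{m,n}(\cO_K)\setminus Z^{tr}_{m,n}(\cO_K)$ is zero.

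The only mild subtlety — and the step I would be most careful about — is the comparison of the two density notions: Corollary~\ref{corDens} and \cite[Theorem 13.1.1]{Ser} are phrased in terms of density inside the full space $K[t]_m\times K[t]_n\cong \As^{m+n+2}$ (with the naive height, which by the earlier remark is equivalent to the Mahler measure), whereas the statement to be proved concerns density inside the smaller set $S_{m,n}(\cO_K)\setminus Z^{tr}_{m,n}(\cO_K)$. Since this smaller set is obtained from $\As^{m+n+2}(\cO_K)$ by removing the $\oK$-points of a proper Zariski-closed subset, a set that has density zero in the ambient space still has density zero after intersection with it; this is exactly the argument already used in the proof of Corollary~\ref{corDens} and in the previous corollary, so it carries over verbatim. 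No new input beyond \cite{Ser}, Corollary~\ref{corDens} and Proposition~\ref{rnkBnd} is required.
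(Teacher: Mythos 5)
Your proposal is correct and follows essentially the same route as the paper: excise the thin exceptional set from Corollary~\ref{corDens}, note it has density zero by \cite[Theorem 13.1.1]{Ser}, and use Proposition~\ref{rnkBnd} (applicable precisely because $Z^{tr}_{m,n}$ has been removed) to bound the rank uniformly there, so it contributes nothing to the average. Your extra care about comparing densities in the ambient affine space versus the smaller set is a reasonable elaboration of what the paper leaves implicit.
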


\begin{proof}
As in the proof of the previous corollary there  a thin subset $Z\subset S_{m,n}(\cO_K)\setminus Z^{tr}_{m,n}(\cO_K)$ such that on $S_{m,n}(\cO_K)\setminus Z$ the rank is zero. 

By Proposition~\ref{rnkBnd} we can bound the Mordell-Weil rank on $Z$ by $10k-2$. 
From \cite[Theorem 13.1.1]{Ser} it follows that the density of $Z$ equals zero.  Hence the average rank is zero.
\end{proof}

This approach to Cowan's conjecture shows that if for a given member in the family there are many twists contained in the family then the average rank is zero. We will now give an example of a family where the average rank is positive. 
\begin{example} For $k$ a positive integer. Consider the following subfamily $\cX$ of $S_{2k,3k}$ given by
\[\{ y^2-x^3+g^3-h^2\mid g\in \Q[t]_{2k},h \in \Q[t]_{3k},\; g^3-h^2 \mbox{ has } 6k \mbox{ distinct factors}\}.   \]
In this case we have that for any $b\in|\cB| $ the set $\tw(b)=\{1 \}.$

In this case one easily checks that every member of $\cX$ has 6 fibers of type $II$, and therefore a torsion-free Mordell-Weil group. Moreover, the Mordell-Weil group contains a non-trivial point $(x,y)=(g,h)$. In particular, the average rank is at least one.
\end{example}

\bibliographystyle{plain}
\bibliography{remke2}

\end{document}